\documentclass{amsart}

\usepackage[T1]{fontenc}
\usepackage[utf8]{inputenc}
\usepackage{lmodern}
\usepackage{amssymb,amsmath,mathtools,mathrsfs}
\usepackage{microtype}
\usepackage{booktabs}
\usepackage{needspace}
\usepackage{xcolor}
\usepackage{graphicx}
\usepackage{tikz,tikz-3dplot}
\usetikzlibrary{calc}
\usepackage[hidelinks]{hyperref}
\hypersetup{pdftitle={Examples of absolutely minimal polyhedral chains in maximum norm spaces},
	pdfauthor={Roger Z\"ust}}

\numberwithin{equation}{section}
\newtheorem{Thm}{Theorem}[section]
\newtheorem{Prop}[Thm]{Proposition}
\newtheorem{Lem}[Thm]{Lemma}
\newtheorem{Cor}[Thm]{Corollary}
\newtheorem{Que}[Thm]{Question}
\theoremstyle{definition}
\newtheorem{Expl}[Thm]{Example}
\newtheorem{Rem}[Thm]{Remark}

\newcommand{\dist}{\operatorname{dist}}

\newcommand{\conv}{\operatorname{conv}}
\newcommand{\spt}{\operatorname{spt}}
\newcommand{\Fill}{\operatorname{Fill}}
\newcommand{\Lip}{\operatorname{Lip}}
\newcommand{\Area}{\operatorname{Area}}
\newcommand{\id}{\mathrm{id}}
\newcommand{\R}{\mathbf R}

\newcommand{\Z}{\mathbf Z}
\newcommand{\N}{\mathbf N}
\newcommand{\cL}{\mathcal L}

\newcommand{\bM}{\mathbf M}
\newcommand{\bI}{\mathbf I}
\newcommand{\bN}{\mathbf N}
\newcommand{\bone}{\mathbf 1}
\newcommand{\curr}[1]{[\![#1]\!]}
\newcommand{\norm}[1]{\lVert#1\rVert}

\definecolor{diskblue}{RGB}{49,113,145}
\definecolor{conegold}{RGB}{211,153,66}
\definecolor{tipcream}{RGB}{215,185,151}
\definecolor{rectcoral}{RGB}{211,113,99}
\definecolor{collarteal}{RGB}{71,145,132}
\definecolor{heatnone}{RGB}{238,241,243}
\definecolor{heathalf}{RGB}{105,179,190}
\definecolor{heatfull}{RGB}{23,79,108}
\tikzset{surfaceedge/.style={draw=black!55,line width=.25pt},
	boundaryedge/.style={draw=black!85,line width=.7pt}}

\newcommand{\tickminus}{\mathord{\scalebox{.65}[1]{$-$}}}
\newcommand{\projectiongrid}{%
	\foreach \g in {-1.2,-.8,.8,1.2}{%
		\draw[black!35,line width=.3pt,dash pattern=on 1.5pt off 1.2pt]
		(\g,-2)--(\g,2);
		\draw[black!35,line width=.3pt,dash pattern=on 1.5pt off 1.2pt]
		(-2,\g)--(2,\g);
	}%
}
\newcommand{\projectionframe}[2]{%
	\draw[black!55,line width=.35pt] (-2,-2) rectangle (2,2);
	\draw[->,black!60,line width=.35pt] (-2.06,-2)--(2.22,-2)
	node[right,inner sep=2pt] {$#1$};
	\draw[->,black!60,line width=.35pt] (-2,-2.06)--(-2,2.22)
	node[above,inner sep=2pt] {$#2$};
	\foreach \v/\lab in {-2/2,-1.2/{\frac65},-.8/{\frac45},0/0,.8/{\frac45},1.2/{\frac65},2/2}{%
		\draw[black!60,line width=.35pt] (\v,-2)--(\v,-2.06);
		\draw[black!60,line width=.35pt] (-2,\v)--(-2.06,\v);
		\ifdim\v pt<0pt
		\node[anchor=north,inner sep=2pt,font=\scriptsize] at (\v,-2.06)
		{$\vphantom{\frac65}\mathllap{\tickminus\mkern-1mu}\lab$};
		\node[anchor=center,inner sep=0pt,xshift=-5pt,font=\scriptsize] at (-2.06,\v)
		{$\mathllap{\tickminus\mkern-1mu}\lab$};
		\else
		\node[anchor=north,inner sep=2pt,font=\scriptsize] at (\v,-2.06)
		{$\vphantom{\frac65}\lab$};
		\node[anchor=center,inner sep=0pt,xshift=-5pt,font=\scriptsize] at (-2.06,\v)
		{$\lab$};
		\fi
	}%
}

\title[Minimal polyhedral chains in maximum norm spaces]{Examples of absolutely minimal polyhedral chains in maximum norm spaces}
\author{Roger Z\"ust}
\email{roger\_zuest@hotmail.com}
\date{\today}
\subjclass[2020]{49Q15, 53C23, 52B10}
\keywords{Polyhedral chains, filling area, metric currents, maximum norm, calibrations}

\begin{document}
	\begin{abstract}
		We construct explicit area-minimizing polyhedral disks in $\R^n$ with the maximum norm for $n=2,3,4$. Their boundaries are polygonal images of the geodesic circle of length $2\pi$, obtained from $n$ distance functions. Piecewise constant calibrations show that these disks minimize mass among Ambrosio--Kirchheim metric currents with the same boundary. The example for $n=4$ gives the lower bound $11\pi^2/20\approx5.42828$ for Gromov's filling area conjecture. For $n>4$, finite linear programs provide further lower bounds.
	\end{abstract}
	\maketitle
	
	\section{Introduction}
	
	Gromov's filling area conjecture~\cite{G} states that a compact orientable Riemannian surface has area at least $2\pi$ if its boundary is isometric to the geodesic circle of length $2\pi$, with distances measured in the surface. Equality holds for the unit hemisphere.
	
	Using distance functions to $n$ points spaced $\pi/n$ apart on a semicircle, we obtain a $1$-Lipschitz map of the circle into $\ell^n_\infty=(\R^n,\norm{\cdot}_\infty)$. Its image is a polygon $S_n$ with $2n$ vertices. For $n=2,3,4$, we construct embedded polyhedral disks $D_n$
	spanning $S_n$ and prove
	\begin{equation}\label{eq:main-values}
		\bM(D_n)=\Fill(S_n)=
		\begin{cases}
			\frac12\pi^2, & n=2,3,\\
			\frac{11}{20}\pi^2, & n=4.
		\end{cases}
	\end{equation}
	Here $\Fill(S_n)$ denotes the least mass among metric
	currents with boundary $S_n$, in the sense of Ambrosio and
	Kirchheim~\cite{AK}.
	
	The maximum norm places these examples in a general setting for metric filling problems. Fr\'echet~\cite[pp.~161--162]{Fr} showed that every separable metric space embeds isometrically into $\ell_\infty=\ell_\infty(\N)$. This space is injective: every $1$-Lipschitz map from a subset of a metric space into $\ell_\infty$ extends to the whole space with the same Lipschitz constant. Together, these properties make $\ell_\infty$ a universal ambient space for metric filling problems in the sense of Gromov~\cite{G}, for any Finsler volume. Polyhedral approximation reduces integral filling problems with
	compactly supported boundary to problems with polyhedral boundary in $\ell^n_\infty$, up to arbitrarily small errors in the least
	filling mass; see Section~\ref{sec:currents}.
	
	The finite-dimensional spaces $\ell^n_\infty$ are also injective. Thus the $1$-Lipschitz map from the circle onto $S_n$ extends to every filling of the circle without increasing mass. In particular, the bound $11\pi^2/20\approx5.42828$ holds for metric fillings of the geodesic circle of length $2\pi$; see Corollary~\ref{cor:circle-currents}. It also bounds the area of every Riemannian filling, including nonorientable ones; see Corollary~\ref{cor:lower_bound}. For comparison, Briggs and Wells~\cite[Theorem~1.2]{BW} obtain $\sqrt3\,\pi^2/4$ by a discrete method. Chen, Li and Zhong~\cite[Theorems~1.1--1.2]{CLZ} obtain $14\zeta(3)/\pi\approx5.35677$ for arbitrary Riemannian fillings and $5.40154$ for orientable fillings.
	
	On rectifiable surfaces, Ambrosio--Kirchheim mass is induced by mass$^\ast$ volume. In $\ell^n_\infty$, the mass$^\ast$ area of a flat piece is the largest Lebesgue area of its coordinate projections. This formula makes the masses of our disks explicit and guides the construction of the piecewise constant calibrations proving the lower bounds. It suffices to verify the calibration inequalities on the injective hull of $S_n$, since fillings can be retracted onto it without increasing mass; see Proposition~\ref{prop:hull}. Each is a sum of coordinate forms $dx_i\wedge dx_j$ with coefficients depending only on $x_i$ and $x_j$; see Proposition~\ref{prop:calibration} and the calibration for $n=4$ in \eqref{eq:n4-form}. The same argument applies to rectifiable chains with coefficients in any complete normed Abelian group; see Remark~\ref{rem:coefficients}. For $n>4$, the search for these calibrations leads to the linear
	programs in Section~\ref{sec:lp}. These yield improved lower bounds
	for the filling area of the circle, including a bound exceeding
	$5.5$ for $n=6$. We ask whether these bounds can approach the
	conjectured value $2\pi$ arbitrarily closely.
	
	\begin{Que}\label{que:sharp-bound}
		For every $\varepsilon>0$, do the calibrations of Proposition~\ref{prop:calibration} give a lower bound $2\pi-\varepsilon$ for $\Fill(S_n)$ for some $n$?
	\end{Que}
	
	The polyhedral form of our minimizers also suggests a regularity question. In Euclidean space, the regularity of area-minimizing integral currents has been studied since the work of Federer and Fleming~\cite{FF} and remains an active subject; see the survey~\cite{DL}. By Almgren's regularity theorem, an $m$-dimensional area-minimizing integral current is smooth in the interior away from a singular set of Hausdorff dimension at most $m-2$; see~\cite{Alm} and \cite[Theorem~0.3]{DLS}. Our examples raise the question of whether polyhedral minimizers
	for mass$^\ast$ in $\ell^n_\infty$ exist more generally.
	
	Write $\mathscr P_m$ for polyhedral $m$-chains with integer coefficients and $\bI_m$ for integral currents. We ask the following.
	
	\begin{Que}\label{que:polyhedral}
		Let $1\le m\le n$ and let $S\in\mathscr P_{m-1}(\ell^n_\infty)$ be a cycle, with total coefficient zero when $m=1$.
		Does there exist $P\in\mathscr P_m(\ell^n_\infty)$ satisfying $\partial P=S$ and
		\[
		\bM(P)=\min\{\bM(T):T\in\bI_m(\ell^n_\infty),\ \partial T=S\}?
		\]
	\end{Que}
	
	Section~\ref{sec:currents} introduces currents and mass, explains the properties of mass$^\ast$ used below, and justifies the finite-dimensional approximation. Section~\ref{sec:nonunique} illustrates nonuniqueness. Sections~\ref{sec:discrete} and~\ref{sec:lp} give the explicit constructions and the linear programs. Finally, Section~\ref{sec:polyhedral} discusses Question~\ref{que:polyhedral}, including the positive answers for $m=1$ and $m=n$.
	
	\section{Currents and mass}\label{sec:currents}
	
	\subsection{Currents}\label{subsec:currents}
	
	We write $\bN_m(X)$ and $\bI_m(X)$ for normal and integral currents of dimension $m\ge0$ in a complete metric space $X$, in the sense of Ambrosio and Kirchheim~\cite{AK}. Their mass and mass measure are denoted by $\bM(T)$ and $\|T\|$. We write $\curr P$ for the current of integration over an oriented polyhedron $P$. For a cycle $S\in\bN_m(X)$, define
	\[
	\Fill_X(S)\coloneqq\inf\{\bM(T):T\in\bN_{m+1}(X),\ \partial T=S\}.
	\]
	The ambient space is omitted when clear.
	
	If $f:X\to Y$ is Lipschitz and $T\in\bN_m(X)$, then
	$\partial(f_\#T)=f_\#(\partial T)$ and
	\begin{equation}\label{eq:local-pushforward}
		\|f_\#T\|\le\Lip(f)^m f_\#\|T\|.
	\end{equation}
	In particular, $1$-Lipschitz maps do not increase mass.
	In $\R^n$, compactly supported metric normal currents identify canonically with compactly supported classical normal currents in the sense of Federer--Fleming~\cite{FF}; see~\cite[Theorem~11.1]{AK} and \cite[Theorem~5.5]{L1}. This identification commutes with the boundary and is unchanged if $\R^n$ is equipped with another norm.
	
	For chains with coefficients in a complete normed Abelian group $G$, we use the rectifiable chains in the theory of De Pauw--Hardt~\cite{DPH}; see also White~\cite{W1,W2} in Euclidean spaces and Adams~\cite{Ad} in Banach spaces. Rectifiable currents with real or integer multiplicities identify with the corresponding rectifiable chains of De Pauw--Hardt~\cite[\S3.6(2)]{DPH}. We equip these chains with the mass induced by mass$^\ast$ volume, as described below; under this identification, their mass agrees with the Ambrosio--Kirchheim mass.
	
	\subsection{Mass}\label{subsec:mass}
	
	A \emph{Finsler $m$-volume} assigns to each $m$-dimensional normed
	space $V$ a Haar measure $\nu_V$, and agrees with ordinary volume
	when the norm is Euclidean. It is monotone under linear contractions:
	if $L:V\to W$ is linear and $1$-Lipschitz between $m$-dimensional
	normed spaces, then
	\[
	\nu_W(L(A))\le\nu_V(A)
	\qquad\text{for every compact }A\subset V.
	\]
	See~\cite[Definition~3.2]{APT}. The Busemann definition~\cite{Bus}
	induces Hausdorff measure with its standard normalization;
	see~\cite[\S3.1 and Theorem~3.23]{APT}.
	
	A Finsler volume induces a mass on rectifiable currents by
	integrating the corresponding volume density weighted by the
	absolute multiplicity. The same construction applies to rectifiable
	$G$-chains, using the norm of the coefficient;
	see~\cite[\S2.1]{Z2}.
	We use mass$^\ast$, introduced by Benson~\cite{Ben} and used by
	Gromov~\cite{G}, and denote it by $\mu_V^{\mathrm{m}\ast}$ or simply
	$\mu$. Its induced mass on rectifiable currents is exactly the
	Ambrosio--Kirchheim mass $\bM$;
	see~\cite[\S9]{AK} and~\cite[Lemma~2.5(2)]{Z2}.
	
	When $\dim V=m$, this volume is equivalently a norm on the one-dimensional space $\Lambda_m V$. The $m$-vector $\xi\coloneqq v_1\wedge\cdots\wedge v_m$ represents the parallelepiped
	$P\coloneqq\{\sum_{i=1}^m t_i v_i:0\le t_i\le1\}\subset V$ and
	\[
	\mu(P)=\mu(\xi)
	=\sup_{\norm{\alpha_1}_*,\ldots,\norm{\alpha_m}_*\le1}
	|(\alpha_1\wedge\cdots\wedge\alpha_m)(\xi)|,
	\]
	where $\norm{\cdot}_*$ is the dual norm on $V^*$.
	For a simple $m$-vector in a larger ambient space, we use the same notation for the volume in its span.
	
	Let $\Lambda(n,m)$ be the increasing $m$-tuples in $\{1,\ldots,n\}$, write
	\[
	dx_\lambda\coloneqq dx_{\lambda_1}\wedge\cdots\wedge dx_{\lambda_m},
	\]
	and let $\pi_\lambda$ be the corresponding coordinate projection. The dual unit ball of $\ell^n_\infty$ is the convex hull of $\pm dx_j$. Multilinearity and Hahn--Banach extension of covectors therefore give
	\begin{equation}\label{eq:mass-density}
		\mu(\xi)=\max_{\lambda\in\Lambda(n,m)}|dx_\lambda(\xi)|.
	\end{equation}
	See also~\cite[Lemma~5.1]{Zfrac} for the corresponding coordinate formula
	for the mass of currents in $\ell_\infty$.
	
	For any Borel set $A$ contained in an $m$-dimensional affine plane, this gives
	\[
	\mu(A)=\max_{\lambda\in\Lambda(n,m)}\cL^m(\pi_\lambda(A)).
	\]
	Indeed, on a fixed plane all coordinate projections have constant
	Jacobians, and one of them realizes the mass$^\ast$ density.
	In particular, the mass of an oriented simplex in $\ell^n_\infty$ is
	\begin{equation}\label{eq:simplex-mass}
		\bM\bigl(\curr{p_0,\ldots,p_m}\bigr)
		=\frac1{m!}\mu\bigl((p_1-p_0)\wedge\cdots\wedge(p_m-p_0)\bigr).
	\end{equation}
	The volume of an arbitrary polyhedron is therefore computed by a sum of maximal absolute minors after triangulation.
	
	For comparison, the Cauchy--Binet identity in Euclidean $\R^n$ gives
	\[
	|\xi|_2^2
	=\det(\langle v_i,v_j\rangle)_{i,j=1}^m
	=\sum_{\lambda\in\Lambda(n,m)}|dx_\lambda(\xi)|^2.
	\]
	Thus the Euclidean density takes the $2$-norm of the minors, whereas the mass$^\ast$ density in~\eqref{eq:mass-density} takes their maximum absolute value.
	
	Mass$^\ast$ is \emph{totally convex}: for each nonzero simple $m$-vector $\xi$ there is a decomposable $m$-covector $\eta$
	such that
	\begin{equation}\label{eq:total-convexity}
		\eta(\xi)=\mu(\xi),\qquad
		|\eta(\zeta)|\le\mu(\zeta)
		\quad\text{for every simple $m$-vector $\zeta$.}
	\end{equation}
	Equivalently, every $m$-plane $V$ admits a linear projection $p_V$ onto $V$
	with $p_V|_V=\id_V$ and
	$\mu(\bigwedge^m p_V(\zeta))\le\mu(\zeta)$.
	This is contraction of $m$-volume; it does not require $\Lip(p_V)\le1$. Mass$^\ast$ has this property in every finite-dimensional normed space
	\cite[Definition~4.26 and Theorems~4.27--4.28]{APT}. In $\ell^n_\infty$, choose a signed coordinate form attaining the maximum in \eqref{eq:mass-density}; then
	$p_V\coloneqq(\pi_\lambda|_V)^{-1}\pi_\lambda$ is the required projection. In particular, every \(m\)-dimensional filling supported in an \(m\)-plane minimizes mass among fillings with the same boundary, regardless of the coefficient group.
	
	For comparison, the \emph{inscribed Riemannian volume} $\mu_V^{\mathrm{ir}}$ is induced by the Euclidean norm whose unit ball is the maximal-volume ellipsoid contained in the unit ball of $V$~\cite[Example~4.4]{Iv}. The \emph{circumscribed Riemannian volume} $\mu_V^{\mathrm{cr}}$ is defined using the minimal-volume ellipsoid containing that ball. Monotonicity and John's ellipsoid theorem~\cite{John} give
	\begin{equation}\label{eq:volume-comparison}
		m^{-m/2}\mu_V^{\mathrm{ir}}\le\mu_V^{\mathrm{cr}}
		\le\nu_V\le\mu_V^{\mathrm{ir}}
	\end{equation}
	for every Finsler $m$-volume $\nu$. Thus any two such volumes satisfy
	\[
	m^{-m/2}\nu'_V\le\nu_V\le m^{m/2}\nu'_V.
	\]
	The factor $m^{m/2}$ is optimal: for $V=\ell^m_\infty$, the two ellipsoids are the Euclidean balls of radii $1$ and $\sqrt m$, so $\mu_V^{\mathrm{ir}}=m^{m/2}\mu_V^{\mathrm{cr}}$.
	
	Since every Finsler volume agrees with Riemannian volume on Riemannian fillings, lower bounds obtained with any of these volumes apply to Gromov's problem. Among these choices, the inscribed Riemannian volume gives the strongest lower bounds and is used for this purpose in~\cite{Z2}. Here mass$^\ast$ is useful because its coordinate formula permits explicit mass calculations and calibrations of polyhedral disks.
	
	We conclude with the approximation underlying the finite-dimensional viewpoint in the introduction. The contractions constructed in~\cite[Proposition~A.6]{DP} approximate the identity on compact subsets of $\ell_\infty$ and take values in subspaces isometric to $\ell^n_\infty$. For a compactly supported integral cycle, the homotopy formula gives a connecting chain of arbitrarily small mass between the cycle and its image under such a contraction. Polyhedral approximation in the finite-dimensional space then gives a polyhedral cycle with another small connecting chain; see~\cite[Theorem~4.2(D)]{DP}. For one-dimensional cycles, the same conclusion follows by polygonal approximation of curves. Although the approximation results in \cite{DP} use Hausdorff mass, the connecting chains also have arbitrarily small mass$^\ast$ by~\eqref{eq:volume-comparison}. Adding or subtracting the connecting chain turns an integral filling of one cycle into an integral filling of the other. By injectivity, the polyhedral cycle has the same least integral filling mass in this subspace as in $\ell_\infty$. Thus these filling problems are approximated, with arbitrarily small error in their least integral filling mass, by problems with polyhedral boundary in $\ell^n_\infty$.
	
	\section{Nonuniqueness}\label{sec:nonunique}
	
	The lack of strict convexity in $\ell^n_\infty$ permits infinitely many minimizing geodesics between the same endpoints. A similar nonuniqueness occurs in higher dimensions. Let $Q\subset\ell^m_\infty$ be a cube, and let
	$f_1,\ldots,f_k:Q\to\R$ be $1$-Lipschitz functions vanishing on
	$\partial Q$. Then
	\[
	F:Q\longrightarrow\ell^{m+k}_\infty,\qquad
	F(p)\coloneqq(p,f_1(p),\ldots,f_k(p))
	\]
	is an isometric embedding. The coordinate projection onto the first
	$m$ coordinates shows that $F_\#\curr Q$ and the flat copy of $\curr Q$ minimize mass with the same boundary.
	
	Sums of such minimizing currents can have topologically complicated supports, as the following example shows.
	
	\begin{Expl}\label{ex:cantor}
		Let $K\subset[0,1]$ be a Cantor set containing $0$ and $1$, and put $f(t)\coloneqq\dist(t,K)$. Both curves $\gamma_0(t)\coloneqq(t,0)$ and $\gamma_1(t)\coloneqq(t,f(t))$, $0\le t\le1$, are isometric embeddings into $\ell^2_\infty$. The integral chain
		\[
		T\coloneqq(\gamma_0)_\#\curr{0,1}+(\gamma_1)_\#\curr{0,1}
		\]
		has boundary $2\curr{(1,0)}-2\curr{(0,0)}$ and mass $2$. Projection onto the first coordinate proves minimality. Its two arcs meet precisely on $K\times\{0\}$; over each complementary interval they form a loop; see Figure~\ref{fig:cantor}. With real coefficients, $T/2$ is a minimizer of mass $1$ with endpoint coefficients $-1$ and $1$, and has the same support.
		
	\end{Expl}
	
	\begin{figure}[htbp]
		\centering
		\begin{tikzpicture}[x=10.5cm,y=10.5cm,line join=round,line cap=round,font=\footnotesize]
			\path[use as bounding box] (-.055,-.075) rectangle (1.055,.205);
			\draw[diskblue,line width=1.1pt] (0,0)--(1,0);
			\draw[conegold!90!black,line width=.85pt] (0.000000000,0.000000000) -- (0.012345679,0.000000000) -- (0.018518519,0.006172840) -- (0.024691358,0.000000000) -- (0.037037037,0.000000000) -- (0.055555556,0.018518519) -- (0.074074074,0.000000000) -- (0.086419753,0.000000000) -- (0.092592593,0.006172840) -- (0.098765432,0.000000000) -- (0.111111111,0.000000000) -- (0.166666667,0.055555556) -- (0.222222222,0.000000000) -- (0.234567901,0.000000000) -- (0.240740741,0.006172840) -- (0.246913580,0.000000000) -- (0.259259259,0.000000000) -- (0.277777778,0.018518519) -- (0.296296296,0.000000000) -- (0.308641975,0.000000000) -- (0.314814815,0.006172840) -- (0.320987654,0.000000000) -- (0.333333333,0.000000000) -- (0.500000000,0.166666667) -- (0.666666667,0.000000000) -- (0.679012346,0.000000000) -- (0.685185185,0.006172840) -- (0.691358025,0.000000000) -- (0.703703704,0.000000000) -- (0.722222222,0.018518519) -- (0.740740741,0.000000000) -- (0.753086420,0.000000000) -- (0.759259259,0.006172840) -- (0.765432099,0.000000000) -- (0.777777778,0.000000000) -- (0.833333333,0.055555556) -- (0.888888889,0.000000000) -- (0.901234568,0.000000000) -- (0.907407407,0.006172840) -- (0.913580247,0.000000000) -- (0.925925926,0.000000000) -- (0.944444444,0.018518519) -- (0.962962963,0.000000000) -- (0.975308642,0.000000000) -- (0.981481481,0.006172840) -- (0.987654321,0.000000000) -- (1.000000000,0.000000000);
			\fill[diskblue] (0,0) circle[radius=.14mm];
			\fill[diskblue] (1,0) circle[radius=.14mm];
			\node[below=3pt] at (0,0) {$(0,0)$};
			\node[below=3pt] at (1,0) {$(1,0)$};
			\node[above=3pt,text=conegold!65!black] at (.5,.166666667) {$\gamma_1(t)=(t,\dist(t,K))$};
			\node[below=4pt,text=diskblue] at (.5,0) {$\gamma_0(t)=(t,0)$};
		\end{tikzpicture}
		\caption{The minimizing chain of Example~\ref{ex:cantor} for the middle-thirds
			Cantor set.}
		\label{fig:cantor}
	\end{figure}
	
	This might suggest that nonuniqueness up to isometry is merely a matter of multiplicity. But even minimizing disks with the same boundary need not be isometric.
	
	\begin{Expl}\label{ex:nonisometric}
		In $\ell^3_\infty$, let
		\[
		D\coloneqq\conv\{0,e_1,e_2\},\qquad c\coloneqq(1/3,1/3,1/3),
		\]
		and let $C$ be the cone built from $\partial D$ to the vertex $c$; see Figure~\ref{fig:nonisometric}. Both are embedded disks with the same triangular boundary. Writing
		\[
		Q\coloneqq\{(x,y):x,y\ge0,\ x+y\le1\},
		\]
		we have
		\[
		D=\{(x,y,0):(x,y)\in Q\},\qquad
		C=\{(x,y,\min\{x,y,1-x-y\}):(x,y)\in Q\}.
		\]
		Orient both disks so that projection onto the $xy$-plane is positive.
		For an oriented triangle $\curr{p_0,p_1,p_2}$, the three entries below
		are the $xy$, $xz$, and $yz$ determinants of the ordered edge vectors
		$p_1-p_0,p_2-p_0$. Its mass is half the largest absolute entry.
		\begin{center}
			\begin{tabular}{lcc}
				\toprule
				Triangle & Coordinate determinants & Mass\\
				\midrule
				$\curr{0,e_1,e_2}$ & $(1,0,0)$ & $1/2$\\
				$\curr{0,e_1,c}$ & $(1/3,1/3,0)$ & $1/6$\\
				$\curr{0,c,e_2}$ & $(1/3,0,-1/3)$ & $1/6$\\
				$\curr{c,e_1,e_2}$ & $(1/3,-1/3,1/3)$ & $1/6$\\
				\bottomrule
			\end{tabular}
		\end{center}
		The $xy$ determinant attains the maximum on every face. Consequently
		$dx\wedge dy$ calibrates both disks, and
		\[
		\partial\curr C=\partial\curr D,\qquad
		\bM(\curr C)=\bM(\curr D)=\frac12.
		\]
		
		Nevertheless, the two disks are not isometric. The tangent plane of $D$ is $\{z=0\}$, whose norm unit disk is a square. In the relative interior of the face $\conv\{c,e_1,e_2\}$, the tangent plane is $\{x+y+z=0\}$, whose norm unit disk is a hexagon. A linear isometry would map one unit disk onto the other, which is impossible because their numbers of extreme points differ. An isometry of the disks would, at an interior differentiability point of this face, induce just such a linear isometry. This applies both to the restricted ambient metrics and to the intrinsic length metrics of the disks.
		
	\end{Expl}
	
	\begin{figure}[htbp]
		\centering
		\tdplotsetmaincoords{72}{95}
		\begin{tikzpicture}[tdplot_main_coords,scale=4.3,line join=round,line cap=round]
			\draw[->,draw=black!60,line width=.45pt] (0,0,0) -- (1.23,0,0);
			\node[font=\small,inner sep=2pt,anchor=north east,xshift=3pt] at (1.23,0,0) {$x$};
			\draw[->,draw=black!60,line width=.45pt] (0,0,0) -- (0,1.1,0);
			\node[font=\small,inner sep=2pt,anchor=west] at (0,1.1,0) {$y$};
			\draw[->,draw=black!60,line width=.45pt] (0,0,0) -- (0,0,0.4);
			\node[font=\small,inner sep=2pt,anchor=south] at (0,0,0.4) {$z$};
			\path[fill=diskblue,fill opacity=.55,draw=diskblue,line width=.7pt] (0,0,0) -- (1,0,0) -- (0,1,0) -- cycle;
			\path[fill=conegold,fill opacity=.12,draw=conegold!70!black,line width=.65pt] (0.3333333,0.3333333,0.3333333) -- (0,1,0) -- (0,0,0) -- cycle;
			\path[fill=conegold,fill opacity=.12,draw=conegold!70!black,line width=.65pt] (0.3333333,0.3333333,0.3333333) -- (0,0,0) -- (1,0,0) -- cycle;
			\path[fill=conegold,fill opacity=.12,draw=conegold!70!black,line width=.65pt] (0.3333333,0.3333333,0.3333333) -- (1,0,0) -- (0,1,0) -- cycle;
			\draw[boundaryedge] (0,0,0) -- (1,0,0);
			\draw[boundaryedge] (1,0,0) -- (0,1,0);
			\draw[boundaryedge] (0,1,0) -- (0,0,0);
			\filldraw[fill=black!85,draw=white,line width=.3pt] (0,0,0) circle (.32pt);
			\node[text=black!85,font=\footnotesize,inner sep=2.5pt,anchor=east] at (0,0,0) {$0$};
			\filldraw[fill=black!85,draw=white,line width=.3pt] (1,0,0) circle (.32pt);
			\node[text=black!85,font=\footnotesize,inner sep=2.5pt,anchor=east] at (1,0,0) {$e_1$};
			\filldraw[fill=black!85,draw=white,line width=.3pt] (0,1,0) circle (.32pt);
			\node[text=black!85,font=\footnotesize,inner sep=4.0pt,anchor=north] at (0,1,0) {$e_2$};
			\filldraw[fill=conegold!65!black,draw=white,line width=.3pt] (0.3333333,0.3333333,0.3333333) circle (.32pt);
			\node[text=conegold!65!black,font=\footnotesize,inner sep=2.5pt,anchor=south] at (0.3333333,0.3333333,0.3333333) {$c$};
		\end{tikzpicture}
		\caption{Two nonisometric minimizing disks with boundary $\partial\conv\{0,e_1,e_2\}$.}
		\label{fig:nonisometric}
	\end{figure}
	
	\section{Polygonal approximations of the circle}\label{sec:discrete}
	
	Let $\mathbb S^1\coloneqq\R/(2\pi\Z)$ have its intrinsic distance $d$
	and the orientation induced by the standard orientation of $\R$. A centered model for its injective hull is
	\begin{equation}\label{eq:circle-hull}
		E(\mathbb S^1)\coloneqq\{f:\R\to\R:\Lip(f)\le1,\ f(t+\pi)=-f(t)\},
	\end{equation}
	with the supremum metric; see~\cite[Proposition~2.7]{Z2}. The unique isometric copy of \(\mathbb S^1\) in \(E(\mathbb S^1)\) consists of the functions \(f_\theta(t)\coloneqq \pi/2-d(\theta,t)\), indexed by \(\theta\in\mathbb S^1\). 
	
	Fix $n\ge2$, put $h_n\coloneqq\pi/n$, and set $\alpha_i\coloneqq ih_n\in\mathbb S^1$ for $1\le i\le2n$. Define $\rho_n : \mathbb S^1 \to \ell^n_\infty$ by
	\[
	\rho_n(\theta) \coloneqq \bigl(\pi/2 - d(\theta,\alpha_i)\bigr)_{i=1}^n
	\]
	and consider
	\[
	S_n\coloneqq \rho_n(\mathbb S^1),\qquad
	\Gamma_n\coloneqq (\rho_n)_\#\curr{\mathbb S^1}.
	\]
	We write $\Fill(S_n)\coloneqq\Fill(\Gamma_n)$. Each coordinate function of $\rho_n$ is $1$-Lipschitz. Distances to the two non-antipodal points $\alpha_1,\alpha_2$ already distinguish points of the circle, so $\rho_n$ is injective.
	The map is affine between successive $\alpha_i$ and has speed one in the maximum norm.
	
	We extend coordinates by $x_{i+n}\coloneqq-x_i$ and set
	\[
	\sigma_n(x_1,\ldots,x_n) \coloneqq (-x_n,x_1,\ldots,x_{n-1}).
	\]
	Write $v_i\coloneqq\rho_n(\alpha_i)$ for $1\le i\le2n$.
	The vertices of $S_n$ form the orbit under $\sigma_n$ of
	\[
	v_1
	= h_n\bigl(n/2,n/2-1,\ldots,1-n/2\bigr).
	\]
	They satisfy $v_{i+n}=-v_i$ and $\sigma_nv_i=v_{i+1}$,
	with indices understood modulo $2n$.
	With $v_{2n+1}\coloneqq v_1$ and the orientation induced by
	increasing $\theta$,
	\begin{equation}\label{eq:orientation}
		\Gamma_n=\sum_{i=1}^{2n}\curr{v_i,v_{i+1}}.
	\end{equation}
	For $1\le k<n$, let $R_k$ be the rectangle
	\begin{equation}\label{eq:projection-rectangle}
		R_k\coloneqq\{(s,t):|s-t|\le k,\ |s+t|\le n-k\}.
	\end{equation}
	Its consecutive vertices are
	\[
	A_k\coloneqq(n/2,n/2-k),\quad B_k\coloneqq(n/2-k,n/2),\quad -A_k,\quad -B_k,
	\]
	and $\cL^2(R_k)=2k(n-k)$. Give $R_k$ the standard orientation of $\R^2$. For $1\le i<j\le n$, the projection of $S_n$ traverses $h_nA_{j-i},h_nB_{j-i},-h_nA_{j-i},-h_nB_{j-i}$ in this counterclockwise order, starting at $\alpha_i$. Thus
	\begin{equation}\label{eq:projected-boundary}
		(\pi_{ij})_\#\Gamma_n=\partial\curr{h_nR_{j-i}}.
	\end{equation}
	
	\subsection{Injective hull}
	\label{sec:injective}
	
	The following is the finite analogue of \eqref{eq:circle-hull}.
	We include the proof for convenience.
	
	\begin{Prop}\label{prop:hull}
		The injective hulls of $S_n\subset\ell^n_\infty$ and of its vertex set are both isometrically realized as
		\begin{equation}\label{eq:finite-hull}
			E(S_n)=h_nE_n,
			\quad\text{where}\quad
			E_n\coloneqq\{x\in\R^n:|x_i-x_{i+1}|\le1
			\text{ for }1\le i\le n\},
		\end{equation}
		with $x_{n+1}=-x_1$. In particular, $h_nE_n$ is a
		$1$-Lipschitz retract of $\ell^n_\infty$.
	\end{Prop}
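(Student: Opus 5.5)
Rescaling by $h_n$, we may work with the vertex set $V\coloneqq\{w_1,\ldots,w_{2n}\}$, where $w_i\coloneqq v_i/h_n$, and with the polygon $S_n/h_n$. The plan has four steps. First, show that $E_n$ is injective, which makes it a $1$-Lipschitz retract of $\ell^n_\infty$. Second, check that $V\subset S_n/h_n\subset E_n$. Third, prove that $E_n$ is minimal among injective subspaces containing $V$; this identifies $E_n$ as the injective hull of $V$. Finally, since $V\subset S_n/h_n\subset E_n$ and $E_n$ is injective, $E_n$ is also the injective hull of $S_n/h_n$.

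For injectivity, it suffices to construct a $1$-Lipschitz retraction $r:\ell^n_\infty\to E_n$, because retracts of injective spaces are injective. The set $E_n$ is cut out by the $2n$ constraints $|x_i-x_{i+1}|\le1$, where $x_{n+1}=-x_1$. I would use the standard formula for hyperconvex hulls defined by difference constraints. Extend coordinates to all indices by $x_{i+n}=-x_i$, so that $x$ becomes an antiperiodic function on $\Z$. Then take the McShane-type upper and lower envelopes
\[
\overline{x}_i\coloneqq\min_j\bigl(x_j+|i-j|\bigr),\qquad
\underline{x}_i\coloneqq\max_j\bigl(x_j-|i-j|\bigr),
\]
and set $r(x)\coloneqq\tfrac12(\overline{x}+\underline{x})$. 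This is the discrete analogue of the midpoint of the two $1$-Lipschitz envelopes, and each of $\overline{x},\underline{x}$ is $1$-Lipschitz in $x$ for the maximum norm. Antiperiodicity is preserved, since $\overline{x}_{i+n}=-\underline{x}_i$. The average of two $1$-Lipschitz sequences is again $1$-Lipschitz, so $r(x)\in E_n$. If $x\in E_n$, then $\overline{x}=\underline{x}=x$, so $r$ fixes $E_n$.

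This matches the model \eqref{eq:circle-hull}: $E_n$ consists of the $1$-Lipschitz antiperiodic functions on $\Z$ with step $1$, which corresponds to step $h_n$ after rescaling. Alternatively, one could invoke the description of injective hulls of finite metric spaces as the set of minimal extremal functions, and identify that set with $E_n$ through the coordinates $f(w_i)$.

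For the containment $V\subset E_n$, each $w_i$ has consecutive coordinates differing by exactly $1$, and the wrap-around step $x_n\mapsto -x_1$ also has size $1$. The set $E_n$ is convex, and the edges of $S_n$ are segments between consecutive vertices, so $S_n/h_n\subset E_n$ as well.

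The hardest step is minimality. I would show that every point $x\in E_n$ is determined by its distances to $V$, with $d(x,w_i)=\tfrac{n}{2}-x_i$ after the sign conventions are fixed. The key computation is to check, by antiperiodicity and the Lipschitz condition, that $\norm{x-w_i}_\infty$ is attained at coordinate $i$. Next, check that the function $f_x(w_i)\coloneqq\norm{x-w_i}_\infty$ is extremal, meaning that for every $i$ there exists $j$ with $f_x(w_i)+f_x(w_j)=\norm{w_i-w_j}_\infty$; the natural candidate is $j=i+n$, the antipodal vertex. Together these say that $x\mapsto f_x$ is an isometric embedding of $E_n$ onto the space of extremal functions on $V$. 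That space is Isbell's model of $E(V)$, which gives minimality. The inequality $\norm{x-w_i}_\infty\le n/2-x_i$ and the antipodal equality are where I expect the bookkeeping with the extended indices to require care.
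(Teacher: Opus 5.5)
Your proof is correct, but it is organized differently from the paper's.

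\textbf{The paper's route.} The paper passes to the abstract $2n$-cycle $C$ and uses Lang's description of $E(C)$ by extremal functions to prove both inclusions at once: an extremal $g$ is $1$-Lipschitz with $g(p)+g(\bar p)=n$, and conversely. The substitution $x_p=n/2-g(p)$ then identifies $E(C)$ with $E_n$. Injectivity of $E_n$, and hence the retraction, is inherited from Isbell's theorem.

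\textbf{Your route.} You prove injectivity by hand, with the retraction $r(x)_i=\tfrac12(\underline{x}_i-\underline{x}_{i+n})$. This is the same antipodal averaging the paper uses later in Lemma~\ref{lem:lp-boxes}. For minimality you need only the one-sided fact that $x\mapsto f_x$ embeds $E_n$ isometrically into $E(V)$, restricting on $V$ to the canonical embedding. Your $f_x(w_i)=n/2-x_i$ is exactly the inverse of the paper's bijection.

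The step you flagged reduces to the identity $d(i,j)+d(j,i+n)=n$ for the cycle distance $d$, which the paper also uses. Take $(w_i)_j=n/2-d(i,j)$ for $1\le j\le 2n$; by antiperiodicity, $\norm{x-w_i}_\infty=\max_{1\le j\le 2n}\bigl(x_j-(w_i)_j\bigr)$. Then
\[
x_j-(w_i)_j\le x_{i+n}+d(j,i+n)+d(i,j)-\tfrac n2=\tfrac n2-x_i,
\]
with equality at $j=i+n$.

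\textbf{One overstatement.} These computations give an isometric embedding of $E_n$ into $E(V)$, not one onto it. Surjectivity is not needed for minimality, and it follows from your first step anyway. The image is injective and contains the copy of $V$. So a $1$-Lipschitz retraction of $E(V)$ onto the image fixes that copy and is the identity, by the rigidity of injective hulls. Alternatively, you can check directly, as the paper does, that every extremal function is $1$-Lipschitz with antipodal sums equal to $n$.

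\textbf{What each route buys.} Yours makes the injectivity and the $1$-Lipschitz retraction explicit and elementary. It also works directly with ambient distances to $V$. The paper's route obtains the complete identification of the hull from a single two-sided computation.
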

	
	\begin{proof}
		Let $C\coloneqq\{1,\ldots,2n\}$ be the vertex set of a cycle with
		unit edge lengths, labelled in cyclic order. Its metric is given by
		\[
		d(p,q)=\min\{|p-q|,2n-|p-q|\}.
		\]
		Write $\bar p\coloneqq p+n$, with indices understood modulo $2n$.
		We first identify $E(C)$ with $E_n$. Recall from~\cite[Section~3]{L2} that $E(C)$, equipped with the
		supremum metric, consists of functions $g:C\to\R$ satisfying
		\[
		g(p)+g(q)\ge d(p,q)\qquad\text{for all }p,q\in C,
		\]
		where, for every $p\in C$, equality holds for at least one $q\in C$.
		Such functions are $1$-Lipschitz, since
		\[
		g(p)=\max_{q\in C}\{d(p,q)-g(q)\}.
		\]
		Fix $p$ and choose $q$ attaining equality. Using
		$d(p,q)+d(q,\bar p)=n$, we obtain
		\[
		n\le g(p)+g(\bar p)
		\le g(p)+g(q)+d(q,\bar p)=n.
		\]
		Conversely, if $g:C\to\R$ is $1$-Lipschitz and satisfies
		$g(p)+g(\bar p)=n$ for every $p$, then
		\[
		g(p)+g(q)
		=n+g(p)-g(\bar q)
		\ge n-d(p,\bar q)=d(p,q),
		\]
		with equality when $q=\bar p$. Thus
		\[
		E(C)=
		\{g:C\to\R:\Lip(g)\le1,\ g(p)+g(\bar p)=n
		\text{ for all }p\in C\}.
		\]
		
		For $g\in E(C)$, set $x_p\coloneqq n/2 - g(p)$ for $1\le p\le2n$
		and write $x\coloneqq(x_1,\ldots,x_n)$.
		The antipodal identity becomes $x_{p+n}=-x_p$.
		Moreover, the Lipschitz condition is equivalent to the bounds
		between neighbouring indices: all other bounds follow by summing
		along shortest paths in the cycle. By antiperiodicity, these bounds
		reduce to $|x_p-x_{p+1}|\le1$ for all $1 \le p \le n$, where $x_{n+1} = -x_1$.
		
		Consequently, $g\mapsto x$ is a bijection from $E(C)$ onto $E_n$. It is an isometry, since for any $g,g'\in E(C)$ and their
		corresponding vectors $x,x'$,
		\[
		\|g-g'\|_\infty
		=\max_{1\le p\le n}|g(p)-g'(p)|
		=\|x-x'\|_\infty.
		\]
		
		The standard embedding $C\to E(C)$ sends $i$ to the distance function $g_i(j)\coloneqq d(i,j)$. Under the preceding
		identification and scaling by $h_n$, this function corresponds to
		\[
		h_n\bigl(n/2-d(i,1),\ldots,n/2-d(i,n)\bigr)
		=\rho_n(\alpha_i)=v_i.
		\]
		Hence $h_nE_n$ is the injective hull of the vertex set of $S_n$. The defining inequalities of $E_n$ show that $h_nE_n$ is convex.
		Since it contains all vertices of the polygon $S_n$, it also
		contains every edge, and hence $S_n$ itself.
		No proper injective subspace of $h_nE_n$ contains the vertex set of $S_n$, so $h_nE_n$ is also the injective hull of $S_n$. Finally, injectivity extends its identity map to a $1$-Lipschitz retraction of $\ell^n_\infty$ onto $h_nE_n$.
	\end{proof}
	
	For $1\le i\le n$ and $1\le k<n$, the projection of $E_n$ onto
	$(x_i,x_{i+k})$ is exactly $R_k$. The defining inequalities of $R_k$
	follow by summing the neighbouring bounds along the paths from $i$ to
	$i+k$ and from $i+k$ to $i+n$ in the $2n$-cycle. Conversely, the
	projection is convex and contains all four vertices of $R_k$, which
	are projections of vertices of $S_n/h_n$. The same neighbouring bounds give $|x_i|\le n/2$ for every $x\in E_n$.
	
	The retraction in Proposition~\ref{prop:hull} sends any filling of $\Gamma_n$ into $h_nE_n$ without increasing its mass. Thus all subsequent minimization problems may be restricted to this compact set.
	
	\subsection{Calibrations}\label{subsec:calibrations}
	
	The following projection estimate gives the calibrations used below.
	
	\begin{Prop}
		\label{prop:calibration}
		Let $E\subset\ell^n_\infty$ be compact, let $2\le m\le n$, and let
		$S\in\bN_{m-1}(\ell^n_\infty)$ be a cycle supported in $E$.
		For each $\lambda\in\Lambda(n,m)$, let
		$\theta_\lambda\in L^1_c(\R^m)$ be the unique function satisfying
		$\partial\curr{\theta_\lambda}=(\pi_\lambda)_\#S$.
		Suppose the bounded Borel functions $f_\lambda:\R^m\to[0,\infty)$ satisfy
		\begin{equation}\label{eq:calibration_condition}
			\sum_{\lambda\in\Lambda(n,m)}f_\lambda(\pi_\lambda(x))\le1
			\qquad\text{for all }x\in E.
		\end{equation}
		Then every $T\in\bN_m(\ell^n_\infty)$ supported in $E$ with $\partial T=S$ satisfies
		\begin{equation}\label{eq:projection-bound}
			\bM(T)\ge\sum_{\lambda\in\Lambda(n,m)}
			\int_{\R^m}f_\lambda(z)|\theta_\lambda(z)|\,d\cL^m(z),
		\end{equation}
		and any such filling attaining equality is mass minimizing among currents supported in $E$.
	\end{Prop}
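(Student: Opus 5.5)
The plan is to pair $T$ against a $m$-form built from the $f_\lambda$, bound this pairing by $\bM(T)$ using the coordinate formula \eqref{eq:mass-density}, and identify it with the right-hand side of \eqref{eq:projection-bound} via the constancy theorem in $\R^m$.

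\emph{Reduction to continuous data.} Since the $f_\lambda$ are only bounded Borel, I first prove the estimate for continuous $f_\lambda$ and then approximate. The condition \eqref{eq:calibration_condition} is imposed only on the compact set $E$, so it does not automatically survive smoothing. I would argue as follows.

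\emph{Step 1: the pairing.} Fix $T\in\bN_m(\ell^n_\infty)$ supported in $E$ with $\partial T=S$. For each $\lambda$, the current $(\pi_\lambda)_\#T$ is a normal $m$-current in $\R^m$ with boundary $(\pi_\lambda)_\#S=\partial\curr{\theta_\lambda}$. By the constancy theorem, a compactly supported normal $m$-cycle in $\R^m$ vanishes, hence
\[
(\pi_\lambda)_\#T=\curr{\theta_\lambda}.
\]
Now pass to the classical Federer--Fleming picture, using the identification recalled in Section~\ref{subsec:currents}. Write $T=\vec T\,\|T\|_{\mathrm{cl}}$, where $\|\cdot\|_{\mathrm{cl}}$ denotes the classical mass measure.

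\emph{Step 2: the pointwise bound.} Take a bounded Borel function $\operatorname{sgn}\theta_\lambda$ and consider the Borel $m$-form
\[
\omega(x)\coloneqq\sum_\lambda f_\lambda(\pi_\lambda x)\,\operatorname{sgn}\theta_\lambda(\pi_\lambda x)\,dx_\lambda .
\]
Then
\[
\int_{\R^m} f_\lambda|\theta_\lambda|\,d\cL^m=\curr{\theta_\lambda}\bigl((f_\lambda\operatorname{sgn}\theta_\lambda)\,dz\bigr)=T\bigl(\pi_\lambda^\#((f_\lambda\operatorname{sgn}\theta_\lambda)\,dz)\bigr).
\]
This uses the extension of normal currents to bounded Borel integrands. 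That extension is legitimate because $(\pi_\lambda)_\#T=\curr{\theta_\lambda}$ is absolutely continuous, so the Borel pushforward formula holds $\|T\|$-a.e.; this is the technical point to check. Summing over $\lambda$ gives $\sum_\lambda\int f_\lambda|\theta_\lambda|=\int\langle\omega,\vec T\rangle\,d\|T\|$.

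At $\|T\|$-a.e.\ $x\in E$, the comass of $\omega(x)$ with respect to mass$^\ast$ is controlled as follows. For any simple $\xi$,
\[
|\omega(x)(\xi)|\le\sum_\lambda f_\lambda(\pi_\lambda x)\,|dx_\lambda(\xi)|\le\Bigl(\sum_\lambda f_\lambda(\pi_\lambda x)\Bigr)\max_\lambda|dx_\lambda(\xi)|\le\mu(\xi)
\]
by \eqref{eq:mass-density} and \eqref{eq:calibration_condition}.

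\emph{Step 3: integrating the bound.} For the main obstacle, I would invoke the fact from Section~\ref{subsec:mass} that the Ambrosio--Kirchheim mass equals the mass$^\ast$-induced mass. Together with the pointwise bound this gives $\bM(T)\ge\int\langle\omega,\vec T\rangle\,d\|T\|$, provided $T$ is rectifiable. For general normal $T$, integrating against $\mu(\vec T)$ requires knowing that the AK mass of a normal current in $\ell^n_\infty$ dominates $\int\|\omega\|_{\mathrm{comass}^\ast}$-type pairings.

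The cleanest way to handle this is to note that each coordinate form is the pullback of $dx_\lambda$ under $\pi_\lambda$. AK mass satisfies the locality estimate
\[
\sum_\lambda\|T\|\bigl(g_\lambda\cdot|(\pi_\lambda)_\#\text{-part}|\bigr)\le\|T\|\Bigl(\sum_\lambda g_\lambda\Bigr),
\]
which follows from the definition of AK mass as the least measure dominating $|T(g,\pi_{\lambda_1},\ldots,\pi_{\lambda_m})|$ with $1$-Lipschitz $\pi_{\lambda_i}$. Concretely,
\[
\Bigl|T\bigl(g_\lambda,\pi_{\lambda_1},\dots,\pi_{\lambda_m}\bigr)\Bigr|\le\int|g_\lambda|\,d\|T\|,
\]
and this inequality still holds after the $T$-measure is decomposed along disjoint Borel pieces. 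Partitioning $E$ into Borel sets on which the signs are fixed and summing then yields
\[
\sum_\lambda\int f_\lambda|\theta_\lambda|\le\int_E\sum_\lambda f_\lambda\circ\pi_\lambda\,d\|T\|\le\bM(T).
\]
Here the subadditivity across different $\lambda$ on the same piece needs the sharper inequality $\sum_\lambda g_\lambda\|T(\cdot,\pi_\lambda)\|\le\|T\|$ for $g_\lambda\ge0$ with $\sum_\lambda g_\lambda\le1$. I would deduce this from the AK characterization of mass as the supremum of $\sum_j T(g_j,\pi^j)$ over 1-Lipschitz tuples and disjointly supported weights, together with a density argument approximating the $g_\lambda$ by partitions.

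\emph{Conclusion.} Equality is then trivially a minimality certificate, because the right-hand side of \eqref{eq:projection-bound} depends only on $S$.
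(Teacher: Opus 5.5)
Once the detours are stripped away, your argument is the paper's. Constancy gives $(\pi_\lambda)_\#T=\curr{\theta_\lambda}$; the defining AK inequality for the $1$-Lipschitz coordinate functions gives $\int f_\lambda|\theta_\lambda|\,d\cL^m\le\int f_\lambda\circ\pi_\lambda\,d\|T\|$ for each $\lambda$ separately (the paper writes this as $|\theta_\lambda|\cL^m=\|\curr{\theta_\lambda}\|\le(\pi_\lambda)_\#\|T\|$); and summing with \eqref{eq:calibration_condition} on $\spt T\subset E$ finishes the proof.

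The step you call the main obstacle is not one. The inequality $\sum_\lambda g_\lambda|T(\,\cdot\,,x_{\lambda_1},\ldots,x_{\lambda_m})|\le\|T\|$ for $g_\lambda\ge0$ with $\sum_\lambda g_\lambda\le1$ follows from the per-$\lambda$ bound $|T(\,\cdot\,,x_{\lambda_1},\ldots,x_{\lambda_m})|\le\|T\|$ just by multiplying by $g_\lambda$ and summing. So the partition and density arguments, the classical $\vec T$ detour with its restriction to rectifiable $T$, and the unfinished reduction to continuous $f_\lambda$ can all be dropped.
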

	
	\begin{proof}
		The projected cycle $(\pi_\lambda)_\#S$ has a compactly supported normal
		cone filling in $\R^m$, which is of the form $\curr{\theta_\lambda}$
		by~\cite[Theorem~3.7]{AK}. Uniqueness follows from the constancy
		theorem~\cite[4.1.7]{F}. Consequently
		$(\pi_\lambda)_\#T=\curr{\theta_\lambda}$ for every filling under consideration.
		Since $\pi_\lambda$ is $1$-Lipschitz into $\ell^m_\infty$, whose
		mass$^\ast$ agrees with $\cL^m$, \eqref{eq:local-pushforward} gives
		\[
		|\theta_\lambda|\cL^m=\|\curr{\theta_\lambda}\|
		\le(\pi_\lambda)_\#\|T\|.
		\]
		Integrating $f_\lambda$ and summing over $\lambda$ now gives
		\eqref{eq:projection-bound}, since the coefficient condition holds on
		$\spt T\subset E$.
	\end{proof}
	
	The mass$^\ast$ definition of volume enters only through the fact that it agrees with Lebesgue measure on $\ell^m_\infty$. For other definitions of volume, the lower bound may require a correction factor.
	
	The weights in Proposition~\ref{prop:calibration} also define a closed Whitney flat form
	\[
	\omega\coloneqq\sum_{\lambda\in\Lambda(n,m)}
	\bigl((f_\lambda\operatorname{sgn}\theta_\lambda)\circ\pi_\lambda\bigr)\,dx_\lambda.
	\]
	Each summand is distributionally closed because its coefficient depends
	only on the coordinates in its wedge; see~\cite[Chapter~IX]{Wh}
	or~\cite[4.1.19]{F}. Its action on a filling $T$ as above is
	\begin{equation}\label{eq:calibration-bound}
		T(\omega)=\sum_{\lambda\in\Lambda(n,m)}
		\int_{\R^m}f_\lambda|\theta_\lambda|\,d\cL^m\le\bM(T).
	\end{equation}
	For the positive projection multiplicities below, the sign factor may be omitted.
	
	\begin{Rem}
		\label{rem:coefficients}
		The proposition also holds for rectifiable $G$-chains in the sense of De Pauw and Hardt~\cite{DPH}, where $G$ is any complete normed Abelian group. Here $\theta_\lambda$ is $G$-valued and $|\theta_\lambda|$ is its norm. We replace the Hausdorff volume used in~\cite{DPH} by mass$^\ast$: on rectifiable chains, mass integrates the norm of the multiplicity against this volume, as in~\cite[Definition~2.4]{Z2}. The proof uses the same push-forward estimate and the top-dimensional constancy theorem~\cite[Theorem~6.4]{DPH2}. No additional assumption on $G$ is needed for these lower bounds.
	\end{Rem}
	
	\subsection{The case \texorpdfstring{$n=2$}{n=2}}
	
	The curve $S_2$ is the boundary of the square $D_2$ with vertices
	$(\pm\pi/2,0)$ and $(0,\pm\pi/2)$. This square is also $E(S_2)$. The
	coordinate form $dx_1\wedge dx_2$ calibrates its filling, and
	\[
	\Fill(S_2)=\frac{\pi^2}{2}=4.934802\ldots.
	\]
	
	\subsection{The case \texorpdfstring{$n=3$}{n=3}}\label{subsec:n3}
	
	Put $h\coloneqq h_3=\pi/3$ and set
	\[
	v_1=\frac h2(3,1,-1),\qquad
	w\coloneqq\frac h2(2,0,-2),\qquad w'\coloneqq\frac h2(2,2,0).
	\]
	Here $w$ and $w'$ are the midpoints of the two edges incident to $v_1$.
	The six images of the oriented triangles
	\[
	A\coloneqq\curr{v_1,w',w},\qquad B\coloneqq\curr{0,w,w'}
	\]
	under $\sigma_3$ form a filling $D_3$ of $\Gamma_3$. The images of $B$ tile a
	flat central hexagon in the plane $x_1-x_2+x_3=0$; the images of $A$ are the six
	outer triangles. Orthogonal projection onto this plane maps these faces to a
	subdivision of a hexagon, so their union is an embedded disk. Writing
	$e_1,e_2,e_3$ for the standard basis, we have
	\[
	\begin{aligned}
		(w'-v_1)\wedge(w-v_1)
		&=\frac{h^2}{2}(e_1\wedge e_2+e_1\wedge e_3),\\
		w\wedge w'
		&=h^2(e_1\wedge e_2+e_1\wedge e_3+e_2\wedge e_3).
	\end{aligned}
	\]
	By~\eqref{eq:simplex-mass}, the mass of each triangle is half the largest absolute coefficient, giving
	\[
	\bM(A)=\frac{h^2}{4},\qquad \bM(B)=\frac{h^2}{2},\qquad
	\bM(D_3)=6\left(\frac{h^2}{4}+\frac{h^2}{2}\right)
	=\frac{\pi^2}{2}.
	\]
	
	\begin{figure}[htbp]
		\centering
		\begin{minipage}{.48\textwidth}\centering
			\begin{tikzpicture}[scale=1.48,line join=round,line cap=round]
				\path[use as bounding box] (-1.72735,-1.607667) rectangle (1.72735,1.607667);
				\path[fill=conegold!65,draw=conegold!65,line width=.16pt] (1.154701,-0.7802981) -- (1.366025,-0.2998305) -- (0.3660254,-1.118983) -- cycle;
				\path[fill=conegold!65,draw=conegold!65,line width=.16pt] (-0.4226497,-1.457667) -- (0.3660254,-1.118983) -- (-1,-0.819152) -- cycle;
				\path[fill=conegold!65,draw=conegold!65,line width=.16pt] (-1.57735,-0.1806371) -- (-1,-0.819152) -- (-1.366025,0.2998305) -- cycle;
				\path[fill=conegold!65,draw=conegold!65,line width=.16pt] (-1.154701,0.7802981) -- (-1.366025,0.2998305) -- (-0.3660254,1.118983) -- cycle;
				\path[fill=conegold!65,draw=conegold!65,line width=.16pt] (0.4226497,1.457667) -- (-0.3660254,1.118983) -- (1,0.819152) -- cycle;
				\path[fill=conegold!65,draw=conegold!65,line width=.16pt] (1.57735,0.1806371) -- (1,0.819152) -- (1.366025,-0.2998305) -- cycle;
				\path[fill=diskblue!35,draw=diskblue!35,line width=.16pt] (0.3660254,-1.118983) -- (1.366025,-0.2998305) -- (1,0.819152) -- (-0.3660254,1.118983) -- (-1.366025,0.2998305) -- (-1,-0.819152) -- cycle;
				\draw[boundaryedge] (1.366025,-0.2998305) -- (1.154701,-0.7802981);
				\draw[surfaceedge] (0.3660254,-1.118983) -- (1.366025,-0.2998305);
				\draw[boundaryedge] (0.3660254,-1.118983) -- (1.154701,-0.7802981);
				\draw[boundaryedge] (-0.4226497,-1.457667) -- (0.3660254,-1.118983);
				\draw[surfaceedge] (-1,-0.819152) -- (0.3660254,-1.118983);
				\draw[boundaryedge] (-1,-0.819152) -- (-0.4226497,-1.457667);
				\draw[boundaryedge] (-1.57735,-0.1806371) -- (-1,-0.819152);
				\draw[surfaceedge] (-1.366025,0.2998305) -- (-1,-0.819152);
				\draw[boundaryedge] (-1.366025,0.2998305) -- (-1.57735,-0.1806371);
				\draw[boundaryedge] (-1.154701,0.7802981) -- (-1.366025,0.2998305);
				\draw[surfaceedge] (-1.366025,0.2998305) -- (-0.3660254,1.118983);
				\draw[boundaryedge] (-1.154701,0.7802981) -- (-0.3660254,1.118983);
				\draw[boundaryedge] (-0.3660254,1.118983) -- (0.4226497,1.457667);
				\draw[surfaceedge] (-0.3660254,1.118983) -- (1,0.819152);
				\draw[boundaryedge] (0.4226497,1.457667) -- (1,0.819152);
				\draw[boundaryedge] (1,0.819152) -- (1.57735,0.1806371);
				\draw[surfaceedge] (1,0.819152) -- (1.366025,-0.2998305);
				\draw[boundaryedge] (1.57735,0.1806371) -- (1.366025,-0.2998305);
				\fill (1.154701,-0.7802981) circle (.7pt);
				\node[font=\footnotesize,inner sep=1.4pt,anchor=north west] at (1.154701,-0.7802981) {$v_1$};
				\fill (0.3660254,-1.118983) circle (.7pt);
				\node[font=\footnotesize,inner sep=3.1pt,anchor=north,xshift=1.5pt] at (0.3660254,-1.118983) {$w$};
				\fill (1.366025,-0.2998305) circle (.7pt);
				\node[font=\footnotesize,inner sep=1.9pt,anchor=west,yshift=-0.5pt] at (1.366025,-0.2998305) {$w'$};
			\end{tikzpicture}
		\end{minipage}\hfill
		\begin{minipage}{.48\textwidth}\centering
			\begin{tikzpicture}[scale=1.48,line join=round,line cap=round]
				\path[use as bounding box] (-1.776779,-0.6168893) rectangle (1.776779,0.6168893);
				\path[fill=conegold!65,draw=conegold!65,line width=.16pt] (0.1232568,-0.1716934) -- (1.281713,-0.07283795) -- (0.9366464,0.1235025) -- cycle;
				\path[fill=conegold!65,draw=conegold!65,line width=.16pt] (-0.6901327,-0.4668893) -- (0.1232568,-0.1716934) -- (-1.158456,-0.09885548) -- cycle;
				\path[fill=conegold!65,draw=conegold!65,line width=.16pt] (-1.626779,0.2691784) -- (-1.158456,-0.09885548) -- (-1.281713,0.07283795) -- cycle;
				\path[fill=conegold!65,draw=conegold!65,line width=.16pt] (-0.1232568,0.1716934) -- (-1.281713,0.07283795) -- (-1.281713,0.07283795) -- (-0.1232568,0.1716934) -- cycle;
				\path[fill=conegold!65,draw=conegold!65,line width=.16pt] (0.6901327,0.4668893) -- (0.4721702,0.3877862) -- (0.9601581,0.1101245) -- (1.158456,0.09885548) -- cycle;
				\path[fill=conegold!65,draw=conegold!65,line width=.16pt] (0.4721702,0.3877862) -- (-0.1232568,0.1716934) -- (0.9079582,0.1130909) -- (0.9366464,0.1235025) -- cycle;
				\path[fill=conegold!65,draw=conegold!65,line width=.16pt] (1.626779,-0.2691784) -- (1.158456,0.09885548) -- (1.281713,-0.07283795) -- cycle;
				\path[fill=diskblue!35,draw=diskblue!35,line width=.16pt] (1.281713,-0.07283795) -- (1.158456,0.09885548) -- (0.9601581,0.1101245) -- cycle;
				\path[fill=diskblue!35,draw=diskblue!35,line width=.16pt] (0.1232568,-0.1716934) -- (0.9079582,0.1130909) -- (-0.1232568,0.1716934) -- (-1.281713,0.07283795) -- (-1.158456,-0.09885548) -- cycle;
				\path[fill=diskblue!35,draw=diskblue!35,line width=.16pt] (0.1232568,-0.1716934) -- (1.281713,-0.07283795) -- (1.281713,-0.07283795) -- (0.1232568,-0.1716934) -- cycle;
				\draw[boundaryedge] (1.281713,-0.07283795) -- (0.9366464,0.1235025);
				\draw[surfaceedge] (0.1232568,-0.1716934) -- (1.281713,-0.07283795);
				\draw[boundaryedge] (0.1232568,-0.1716934) -- (0.9366464,0.1235025);
				\draw[boundaryedge] (-0.6901327,-0.4668893) -- (0.1232568,-0.1716934);
				\draw[surfaceedge] (-1.158456,-0.09885548) -- (0.1232568,-0.1716934);
				\draw[boundaryedge] (-1.158456,-0.09885548) -- (-0.6901327,-0.4668893);
				\draw[boundaryedge] (-1.626779,0.2691784) -- (-1.158456,-0.09885548);
				\draw[surfaceedge] (-1.281713,0.07283795) -- (-1.158456,-0.09885548);
				\draw[boundaryedge] (-1.281713,0.07283795) -- (-1.626779,0.2691784);
				\draw[surfaceedge] (-1.281713,0.07283795) -- (-0.1232568,0.1716934);
				\draw[boundaryedge] (-0.1232568,0.1716934) -- (0.6901327,0.4668893);
				\draw[surfaceedge] (-0.1232568,0.1716934) -- (0.9079582,0.1130909);
				\draw[surfaceedge] (0.9601581,0.1101245) -- (1.158456,0.09885548);
				\draw[boundaryedge] (0.6901327,0.4668893) -- (1.158456,0.09885548);
				\draw[boundaryedge] (1.158456,0.09885548) -- (1.626779,-0.2691784);
				\draw[surfaceedge] (1.158456,0.09885548) -- (1.281713,-0.07283795);
				\draw[boundaryedge] (1.626779,-0.2691784) -- (1.281713,-0.07283795);
			\end{tikzpicture}
		\end{minipage}
		\caption{The disk $D_3$. The central hexagon is a single flat region and the six triangles
			fold alternately above and below it.}
		\label{fig:n3}
	\end{figure}
	
	This shows that $\Fill(S_3)\le\pi^2/2$. For the reverse inequality,
	define $f(s,t)\coloneqq\tfrac12\bone_{\{st>0\}}$ and use the
	antiperiodic convention $x_4\coloneqq-x_1$. The form
	\begin{equation}\label{eq:n3-form}
		\omega\coloneqq\sum_{i=1}^3 f(x_i,x_{i+1})\,dx_i\wedge dx_{i+1}
	\end{equation}
	satisfies the coefficient bound of Proposition~\ref{prop:calibration} on all of $\R^3$: the three products
	\[
	x_1x_2,\quad x_2x_3,\quad x_3x_4 = x_3(-x_1)
	\]
	cannot all be positive. In ordinary coordinates its
	$dx_1\wedge dx_3$ coefficient is $\tfrac12\bone_{\{x_1x_3<0\}}$.
	
	Each signed coordinate projection $x\mapsto(x_i,x_{i+1})$, $i=1,2,3$,
	sends $\Gamma_3$ to $\partial\curr{hR_1}$. The rectangle $R_1$ has area $4$, and its intersection with each of the first and third quadrants has area $3/2$. Hence
	\[
	\int_{hR_1}f\,d\cL^2
	=\frac12\left(\frac32+\frac32\right)h^2
	=\frac32h^2.
	\]
	Proposition~\ref{prop:calibration} now gives $\bM(T)\ge\frac92h^2=\pi^2/2$
	for every competitor. Thus $D_3$ is absolutely minimizing.
	
	\subsection{The case \texorpdfstring{$n=4$}{n=4}}\label{subsec:n4}
	
	Put $h\coloneqq h_4=\pi/4$. In the original scale of $S_4$,
	the points used in the construction have coordinates
	\begin{align*}
		v_1&=\tfrac h5(10,5,0,-5),
		&v_2&=\tfrac h5(5,10,5,0),\\
		v_3&=\tfrac h5(0,5,10,5),&&\\[2pt]
		a_1&\coloneqq\tfrac h5(7,8,3,-2),
		&a_2&\coloneqq\tfrac h5(3,8,7,2),\\
		a_3&\coloneqq\tfrac h5(2,7,8,3),&&\\[2pt]
		b_1&\coloneqq\tfrac h5(6,6,2,-2),
		&b_2&\coloneqq\tfrac h5(2,6,6,2),\\
		c_1&\coloneqq\tfrac h5(4,6,4,0),
		&c_2&\coloneqq\tfrac h5(0,4,6,4).
	\end{align*}
	They satisfy
	\begin{gather*}
		v_2=\sigma_4v_1,\qquad v_3=\sigma_4v_2,\\
		a_1=\tfrac25v_1+\tfrac35v_2,\qquad
		a_2=\tfrac35v_2+\tfrac25v_3,\qquad
		a_3=\sigma_4a_1,\\
		b_2=\sigma_4b_1,\qquad
		c_1=\tfrac12(b_1+b_2),\qquad
		c_2=\sigma_4c_1.
	\end{gather*}
	In particular, $a_1,v_2,a_2,a_3$ occur in this order along
	$\Gamma_4$, and $c_1$ is the midpoint of $[b_1,b_2]$.
	
	The points $a_1,a_2,b_2,b_1$, in that order, form an oriented
	parallelogram $P$. Let $D_4$ be the sum of the eight images
	under $\sigma_4$ of the five oriented faces
	\begin{equation}\label{eq:n4-faces}
		P,\quad
		\curr{a_1,v_2,a_2},\quad
		\curr{a_2,a_3,b_2},\quad
		\curr{b_2,c_2,c_1},\quad
		\curr{0,c_1,c_2}.
	\end{equation}
	The common edges cancel after subdividing $[b_1,b_2]$ at $c_1$.
	The remaining edges subdivide $\Gamma_4$. All vertices belong
	to $hE_4$, hence all faces do as well.
	Figure~\ref{fig:n4-block} shows the five-face building block.
	
	\begin{figure}[htbp]
		\centering
		\begin{tikzpicture}[
			scale=1.5,
			line join=round,
			line cap=round,
			font=\footnotesize
			]
			\coordinate (O) at (0,0);
			\coordinate (v2) at (1.707107,1.707107);
			\coordinate (a1) at (1.989949,1.024264);
			\coordinate (a2) at (1.024264,1.989949);
			\coordinate (a3) at (.6828427,2.131371);
			\coordinate (b1) at (1.648528,.6828427);
			\coordinate (b2) at (.6828427,1.648528);
			\coordinate (c1) at (1.165685,1.165685);
			\coordinate (c2) at (0,1.648528);
			
			\path[fill=rectcoral!75]
			(a1)--(a2)--(b2)--(c1)--(b1)--cycle;
			\path[fill=conegold!75]
			(a1)--(v2)--(a2)--cycle;
			\path[fill=tipcream]
			(a2)--(a3)--(b2)--cycle;
			\path[fill=collarteal!65]
			(b2)--(c2)--(c1)--cycle;
			\path[fill=diskblue!35]
			(O)--(c1)--(c2)--cycle;
			
			\draw[surfaceedge]
			(a1)--(b1)--(c1)--(O)--(c2)--(b2)--(a3);
			\draw[surfaceedge] (a2)--(b2)--(c1);
			\draw[surfaceedge] (a1)--(a2);
			\draw[surfaceedge] (c1)--(c2);
			\draw[boundaryedge] (a1)--(v2)--(a2)--(a3);
			
			\foreach \p in {O,v2,a1,a2,a3,b1,b2,c1,c2}
			\fill[black!75] (\p) circle[radius=.7pt];
			
			\node[anchor=north,inner sep=3pt,xshift=-2pt]
			at (O) {$0$};
			\node[anchor=south west,inner sep=.75pt]
			at (v2) {$v_2$};
			\node[anchor=west,inner sep=2.5pt,yshift=-2pt]
			at (a1) {$a_1$};
			\node[anchor=south,inner sep=2pt,xshift=2pt]
			at (a2) {$a_2$};
			\node[anchor=south east,inner sep=1pt]
			at (a3) {$a_3$};
			\node[anchor=north,inner sep=2pt]
			at (b1) {$b_1$};
			\node[anchor=south east,inner sep=1.5pt]
			at (b2) {$b_2$};
			\node[anchor=north,inner sep=3.5pt,yshift=-1.5pt]
			at (c1) {$c_1$};
			\node[anchor=east,inner sep=1.5pt,yshift=1.5pt]
			at (c2) {$c_2$};
			\node at (1.34,1.34) {$P$};
		\end{tikzpicture}
		\caption{The five faces in~\eqref{eq:n4-faces}, shown in
			the same projection and colors as the left panel of
			Figure~\ref{fig:n4}.}
		\label{fig:n4-block}
	\end{figure}
	
	The following table gives the signed areas of the six coordinate projections
	in the order $12,13,14,23,24,34$, together with the mass of each face.
	Areas and masses are expressed in units of $(h/5)^2$. The mass is the
	largest absolute projection area.
	\begin{center}
		\begin{tabular}{lcc}
			\toprule
			Face & Signed projection areas & Mass\\
			\midrule
			$P$ & $(8,8,4,8,8,4)$ & $8$\\
			$\curr{a_1,v_2,a_2}$ & $(4,0,0,4,4,0)$ & $4$\\
			$\curr{a_2,a_3,b_2}$ & $(\tfrac12,1,\tfrac12,\tfrac32,1,\tfrac12)$ & $3/2$\\
			$\curr{b_2,c_2,c_1}$ & $(2,2,0,2,2,2)$ & $2$\\
			$\curr{0,c_1,c_2}$ & $(8,12,8,10,12,8)$ & $12$\\
			\bottomrule
		\end{tabular}
	\end{center}
	Consequently,
	\begin{equation}\label{eq:n4-upper}
		\bM(D_4)=8\left(8+4+\frac32+2+12\right)\left(\frac h5\right)^2
		=\frac{44}{5}h^2=\frac{11}{20}\pi^2.
	\end{equation}
	
	The projections in Figure~\ref{fig:n4} are orthogonal with respect to the standard Euclidean inner product on $\R^4$. We use the orthonormal basis
	\begin{align*}
		a&\coloneqq\tfrac12(\sqrt2,1,0,-1),&
		b&\coloneqq\tfrac12(0,1,\sqrt2,1),\\
		c&\coloneqq\tfrac12(1,-\sqrt2,1,0),&
		d&\coloneqq\tfrac12(-1,0,1,-\sqrt2).
	\end{align*}
	Projection onto $\operatorname{span}(a,b)$ identifies $D_4$ with a subdivided regular octagon; hence $D_4$ is an embedded disk. Figure~\ref{fig:coordinate-projections} shows the two coordinate projections used by the calibration constructed below.
	
	\begin{figure}[htbp]
		\centering
		\begin{minipage}[t]{.37\textwidth}\centering
			\parbox[c][4.36cm][c]{\linewidth}{\centering
				\begin{tikzpicture}[scale=0.85,line join=round,line cap=round]
					\path[use as bounding box] (-2.564214,-2.564214) rectangle (2.564214,2.564214);
					\path[fill=rectcoral!75,draw=rectcoral!75,line width=.16pt] (1.024264,1.989949) -- (0.6828427,1.648528) -- (1.165685,1.165685) -- (1.648528,0.6828427) -- (1.989949,1.024264) -- cycle;
					\path[fill=conegold!75,draw=conegold!75,line width=.16pt] (1.707107,1.707107) -- (1.024264,1.989949) -- (1.989949,1.024264) -- cycle;
					\path[fill=tipcream,draw=tipcream,line width=.16pt] (0.6828427,2.131371) -- (0.6828427,1.648528) -- (1.024264,1.989949) -- cycle;
					\path[fill=collarteal!65,draw=collarteal!65,line width=.16pt] (0.6828427,1.648528) -- (0,1.648528) -- (1.165685,1.165685) -- cycle;
					\path[fill=diskblue!35,draw=diskblue!35,line width=.16pt] (0,1.648528) -- (0,0) -- (1.165685,1.165685) -- cycle;
					\path[fill=rectcoral!75,draw=rectcoral!75,line width=.16pt] (2.131371,0.6828427) -- (1.648528,0.6828427) -- (1.648528,0) -- (1.648528,-0.6828427) -- (2.131371,-0.6828427) -- cycle;
					\path[fill=conegold!75,draw=conegold!75,line width=.16pt] (2.414214,0) -- (2.131371,0.6828427) -- (2.131371,-0.6828427) -- cycle;
					\path[fill=tipcream,draw=tipcream,line width=.16pt] (1.989949,1.024264) -- (1.648528,0.6828427) -- (2.131371,0.6828427) -- cycle;
					\path[fill=collarteal!65,draw=collarteal!65,line width=.16pt] (1.648528,0.6828427) -- (1.165685,1.165685) -- (1.648528,0) -- cycle;
					\path[fill=diskblue!35,draw=diskblue!35,line width=.16pt] (1.165685,1.165685) -- (0,0) -- (1.648528,0) -- cycle;
					\path[fill=rectcoral!75,draw=rectcoral!75,line width=.16pt] (1.989949,-1.024264) -- (1.648528,-0.6828427) -- (1.165685,-1.165685) -- (0.6828427,-1.648528) -- (1.024264,-1.989949) -- cycle;
					\path[fill=conegold!75,draw=conegold!75,line width=.16pt] (1.707107,-1.707107) -- (1.989949,-1.024264) -- (1.024264,-1.989949) -- cycle;
					\path[fill=tipcream,draw=tipcream,line width=.16pt] (2.131371,-0.6828427) -- (1.648528,-0.6828427) -- (1.989949,-1.024264) -- cycle;
					\path[fill=collarteal!65,draw=collarteal!65,line width=.16pt] (1.648528,-0.6828427) -- (1.648528,0) -- (1.165685,-1.165685) -- cycle;
					\path[fill=diskblue!35,draw=diskblue!35,line width=.16pt] (1.648528,0) -- (0,0) -- (1.165685,-1.165685) -- cycle;
					\path[fill=rectcoral!75,draw=rectcoral!75,line width=.16pt] (0.6828427,-2.131371) -- (0.6828427,-1.648528) -- (0,-1.648528) -- (-0.6828427,-1.648528) -- (-0.6828427,-2.131371) -- cycle;
					\path[fill=conegold!75,draw=conegold!75,line width=.16pt] (0,-2.414214) -- (0.6828427,-2.131371) -- (-0.6828427,-2.131371) -- cycle;
					\path[fill=tipcream,draw=tipcream,line width=.16pt] (1.024264,-1.989949) -- (0.6828427,-1.648528) -- (0.6828427,-2.131371) -- cycle;
					\path[fill=collarteal!65,draw=collarteal!65,line width=.16pt] (0.6828427,-1.648528) -- (1.165685,-1.165685) -- (0,-1.648528) -- cycle;
					\path[fill=diskblue!35,draw=diskblue!35,line width=.16pt] (1.165685,-1.165685) -- (0,0) -- (0,-1.648528) -- cycle;
					\path[fill=rectcoral!75,draw=rectcoral!75,line width=.16pt] (-1.024264,-1.989949) -- (-0.6828427,-1.648528) -- (-1.165685,-1.165685) -- (-1.648528,-0.6828427) -- (-1.989949,-1.024264) -- cycle;
					\path[fill=conegold!75,draw=conegold!75,line width=.16pt] (-1.707107,-1.707107) -- (-1.024264,-1.989949) -- (-1.989949,-1.024264) -- cycle;
					\path[fill=tipcream,draw=tipcream,line width=.16pt] (-0.6828427,-2.131371) -- (-0.6828427,-1.648528) -- (-1.024264,-1.989949) -- cycle;
					\path[fill=collarteal!65,draw=collarteal!65,line width=.16pt] (-0.6828427,-1.648528) -- (0,-1.648528) -- (-1.165685,-1.165685) -- cycle;
					\path[fill=diskblue!35,draw=diskblue!35,line width=.16pt] (0,-1.648528) -- (0,0) -- (-1.165685,-1.165685) -- cycle;
					\path[fill=rectcoral!75,draw=rectcoral!75,line width=.16pt] (-2.131371,-0.6828427) -- (-1.648528,-0.6828427) -- (-1.648528,0) -- (-1.648528,0.6828427) -- (-2.131371,0.6828427) -- cycle;
					\path[fill=conegold!75,draw=conegold!75,line width=.16pt] (-2.414214,0) -- (-2.131371,-0.6828427) -- (-2.131371,0.6828427) -- cycle;
					\path[fill=tipcream,draw=tipcream,line width=.16pt] (-1.989949,-1.024264) -- (-1.648528,-0.6828427) -- (-2.131371,-0.6828427) -- cycle;
					\path[fill=collarteal!65,draw=collarteal!65,line width=.16pt] (-1.648528,-0.6828427) -- (-1.165685,-1.165685) -- (-1.648528,0) -- cycle;
					\path[fill=diskblue!35,draw=diskblue!35,line width=.16pt] (-1.165685,-1.165685) -- (0,0) -- (-1.648528,0) -- cycle;
					\path[fill=rectcoral!75,draw=rectcoral!75,line width=.16pt] (-1.989949,1.024264) -- (-1.648528,0.6828427) -- (-1.165685,1.165685) -- (-0.6828427,1.648528) -- (-1.024264,1.989949) -- cycle;
					\path[fill=conegold!75,draw=conegold!75,line width=.16pt] (-1.707107,1.707107) -- (-1.989949,1.024264) -- (-1.024264,1.989949) -- cycle;
					\path[fill=tipcream,draw=tipcream,line width=.16pt] (-2.131371,0.6828427) -- (-1.648528,0.6828427) -- (-1.989949,1.024264) -- cycle;
					\path[fill=collarteal!65,draw=collarteal!65,line width=.16pt] (-1.648528,0.6828427) -- (-1.648528,0) -- (-1.165685,1.165685) -- cycle;
					\path[fill=diskblue!35,draw=diskblue!35,line width=.16pt] (-1.648528,0) -- (0,0) -- (-1.165685,1.165685) -- cycle;
					\path[fill=rectcoral!75,draw=rectcoral!75,line width=.16pt] (-0.6828427,2.131371) -- (-0.6828427,1.648528) -- (0,1.648528) -- (0.6828427,1.648528) -- (0.6828427,2.131371) -- cycle;
					\path[fill=conegold!75,draw=conegold!75,line width=.16pt] (0,2.414214) -- (-0.6828427,2.131371) -- (0.6828427,2.131371) -- cycle;
					\path[fill=tipcream,draw=tipcream,line width=.16pt] (-1.024264,1.989949) -- (-0.6828427,1.648528) -- (-0.6828427,2.131371) -- cycle;
					\path[fill=collarteal!65,draw=collarteal!65,line width=.16pt] (-0.6828427,1.648528) -- (-1.165685,1.165685) -- (0,1.648528) -- cycle;
					\path[fill=diskblue!35,draw=diskblue!35,line width=.16pt] (-1.165685,1.165685) -- (0,0) -- (0,1.648528) -- cycle;
					\draw[surfaceedge] (0.6828427,1.648528) -- (1.024264,1.989949);
					\draw[surfaceedge] (0.6828427,1.648528) -- (1.165685,1.165685);
					\draw[surfaceedge] (1.165685,1.165685) -- (1.648528,0.6828427);
					\draw[surfaceedge] (1.648528,0.6828427) -- (1.989949,1.024264);
					\draw[surfaceedge] (1.024264,1.989949) -- (1.989949,1.024264);
					\draw[boundaryedge] (1.024264,1.989949) -- (1.707107,1.707107);
					\draw[boundaryedge] (1.707107,1.707107) -- (1.989949,1.024264);
					\draw[surfaceedge] (0.6828427,1.648528) -- (0.6828427,2.131371);
					\draw[boundaryedge] (0.6828427,2.131371) -- (1.024264,1.989949);
					\draw[surfaceedge] (0,1.648528) -- (0.6828427,1.648528);
					\draw[surfaceedge] (0,1.648528) -- (1.165685,1.165685);
					\draw[surfaceedge] (0,0) -- (0,1.648528);
					\draw[surfaceedge] (0,0) -- (1.165685,1.165685);
					\draw[surfaceedge] (1.648528,0.6828427) -- (2.131371,0.6828427);
					\draw[surfaceedge] (1.648528,0) -- (1.648528,0.6828427);
					\draw[surfaceedge] (1.648528,-0.6828427) -- (1.648528,0);
					\draw[surfaceedge] (1.648528,-0.6828427) -- (2.131371,-0.6828427);
					\draw[surfaceedge] (2.131371,-0.6828427) -- (2.131371,0.6828427);
					\draw[boundaryedge] (2.131371,0.6828427) -- (2.414214,0);
					\draw[boundaryedge] (2.131371,-0.6828427) -- (2.414214,0);
					\draw[boundaryedge] (1.989949,1.024264) -- (2.131371,0.6828427);
					\draw[surfaceedge] (1.165685,1.165685) -- (1.648528,0);
					\draw[surfaceedge] (0,0) -- (1.648528,0);
					\draw[surfaceedge] (1.648528,-0.6828427) -- (1.989949,-1.024264);
					\draw[surfaceedge] (1.165685,-1.165685) -- (1.648528,-0.6828427);
					\draw[surfaceedge] (0.6828427,-1.648528) -- (1.165685,-1.165685);
					\draw[surfaceedge] (0.6828427,-1.648528) -- (1.024264,-1.989949);
					\draw[surfaceedge] (1.024264,-1.989949) -- (1.989949,-1.024264);
					\draw[boundaryedge] (1.707107,-1.707107) -- (1.989949,-1.024264);
					\draw[boundaryedge] (1.024264,-1.989949) -- (1.707107,-1.707107);
					\draw[boundaryedge] (1.989949,-1.024264) -- (2.131371,-0.6828427);
					\draw[surfaceedge] (1.165685,-1.165685) -- (1.648528,0);
					\draw[surfaceedge] (0,0) -- (1.165685,-1.165685);
					\draw[surfaceedge] (0.6828427,-2.131371) -- (0.6828427,-1.648528);
					\draw[surfaceedge] (0,-1.648528) -- (0.6828427,-1.648528);
					\draw[surfaceedge] (-0.6828427,-1.648528) -- (0,-1.648528);
					\draw[surfaceedge] (-0.6828427,-2.131371) -- (-0.6828427,-1.648528);
					\draw[surfaceedge] (-0.6828427,-2.131371) -- (0.6828427,-2.131371);
					\draw[boundaryedge] (0,-2.414214) -- (0.6828427,-2.131371);
					\draw[boundaryedge] (-0.6828427,-2.131371) -- (0,-2.414214);
					\draw[boundaryedge] (0.6828427,-2.131371) -- (1.024264,-1.989949);
					\draw[surfaceedge] (0,-1.648528) -- (1.165685,-1.165685);
					\draw[surfaceedge] (0,-1.648528) -- (0,0);
					\draw[surfaceedge] (-1.024264,-1.989949) -- (-0.6828427,-1.648528);
					\draw[surfaceedge] (-1.165685,-1.165685) -- (-0.6828427,-1.648528);
					\draw[surfaceedge] (-1.648528,-0.6828427) -- (-1.165685,-1.165685);
					\draw[surfaceedge] (-1.989949,-1.024264) -- (-1.648528,-0.6828427);
					\draw[surfaceedge] (-1.989949,-1.024264) -- (-1.024264,-1.989949);
					\draw[boundaryedge] (-1.707107,-1.707107) -- (-1.024264,-1.989949);
					\draw[boundaryedge] (-1.989949,-1.024264) -- (-1.707107,-1.707107);
					\draw[boundaryedge] (-1.024264,-1.989949) -- (-0.6828427,-2.131371);
					\draw[surfaceedge] (-1.165685,-1.165685) -- (0,-1.648528);
					\draw[surfaceedge] (-1.165685,-1.165685) -- (0,0);
					\draw[surfaceedge] (-2.131371,-0.6828427) -- (-1.648528,-0.6828427);
					\draw[surfaceedge] (-1.648528,-0.6828427) -- (-1.648528,0);
					\draw[surfaceedge] (-1.648528,0) -- (-1.648528,0.6828427);
					\draw[surfaceedge] (-2.131371,0.6828427) -- (-1.648528,0.6828427);
					\draw[surfaceedge] (-2.131371,-0.6828427) -- (-2.131371,0.6828427);
					\draw[boundaryedge] (-2.414214,0) -- (-2.131371,-0.6828427);
					\draw[boundaryedge] (-2.414214,0) -- (-2.131371,0.6828427);
					\draw[boundaryedge] (-2.131371,-0.6828427) -- (-1.989949,-1.024264);
					\draw[surfaceedge] (-1.648528,0) -- (-1.165685,-1.165685);
					\draw[surfaceedge] (-1.648528,0) -- (0,0);
					\draw[surfaceedge] (-1.989949,1.024264) -- (-1.648528,0.6828427);
					\draw[surfaceedge] (-1.648528,0.6828427) -- (-1.165685,1.165685);
					\draw[surfaceedge] (-1.165685,1.165685) -- (-0.6828427,1.648528);
					\draw[surfaceedge] (-1.024264,1.989949) -- (-0.6828427,1.648528);
					\draw[surfaceedge] (-1.989949,1.024264) -- (-1.024264,1.989949);
					\draw[boundaryedge] (-1.989949,1.024264) -- (-1.707107,1.707107);
					\draw[boundaryedge] (-1.707107,1.707107) -- (-1.024264,1.989949);
					\draw[boundaryedge] (-2.131371,0.6828427) -- (-1.989949,1.024264);
					\draw[surfaceedge] (-1.648528,0) -- (-1.165685,1.165685);
					\draw[surfaceedge] (-1.165685,1.165685) -- (0,0);
					\draw[surfaceedge] (-0.6828427,1.648528) -- (-0.6828427,2.131371);
					\draw[surfaceedge] (-0.6828427,1.648528) -- (0,1.648528);
					\draw[surfaceedge] (-0.6828427,2.131371) -- (0.6828427,2.131371);
					\draw[boundaryedge] (-0.6828427,2.131371) -- (0,2.414214);
					\draw[boundaryedge] (0,2.414214) -- (0.6828427,2.131371);
					\draw[boundaryedge] (-1.024264,1.989949) -- (-0.6828427,2.131371);
					\draw[surfaceedge] (-1.165685,1.165685) -- (0,1.648528);
			\end{tikzpicture}}
		\end{minipage}\hfill
		\begin{minipage}[t]{.61\textwidth}\centering
			\parbox[c][4.36cm][c]{\linewidth}{\centering
				\begin{tikzpicture}[scale=1.24,line join=round,line cap=round]
					\path[use as bounding box] (-2.481951,-0.801502) rectangle (2.481951,0.801502);
					\path[fill=rectcoral!75,draw=rectcoral!75,line width=.16pt] (1.882013,0.08095021) -- (1.415623,0.1408337) -- (1.592356,0.02629967) -- (1.706815,-0.04787744) -- (1.954616,0.03389924) -- cycle;
					\path[fill=conegold!75,draw=conegold!75,line width=.16pt] (2.138204,-0.1356281) -- (2.331951,-0.2994176) -- (2.235479,-0.1481179) -- cycle;
					\path[fill=tipcream,draw=tipcream,line width=.16pt] (1.657045,0.2711341) -- (1.415623,0.1408337) -- (1.882013,0.08095021) -- cycle;
					\path[fill=collarteal!65,draw=collarteal!65,line width=.16pt] (1.415623,0.1408337) -- (0.8242641,0.2817051) -- (1.592356,0.02629967) -- cycle;
					\path[fill=diskblue!35,draw=diskblue!35,line width=.16pt] (0.8242641,0.2817051) -- (0,0) -- (1.592356,0.02629967) -- cycle;
					\path[fill=rectcoral!75,draw=rectcoral!75,line width=.16pt] (1.541988,-0.1022722) -- (1.427667,-0.1093387) -- (1.086246,-0.130443) -- (1.5044,-0.1146767) -- cycle;
					\path[fill=conegold!75,draw=conegold!75,line width=.16pt] (1.5044,-0.1146767) -- (2.187243,-0.07246808) -- (2.09077,0.07883162) -- cycle;
					\path[fill=tipcream,draw=tipcream,line width=.16pt] (1.868198,-0.1009598) -- (2.235479,-0.1481179) -- (2.197493,-0.08854388) -- cycle;
					\path[fill=tipcream,draw=tipcream,line width=.16pt] (2.187243,-0.07246808) -- (1.822088,-0.0950394) -- (1.868198,-0.1009598) -- (2.197493,-0.08854388) -- cycle;
					\path[fill=collarteal!65,draw=collarteal!65,line width=.16pt] (1.706815,-0.04787744) -- (1.592356,0.02629967) -- (1.427667,-0.1093387) -- (1.541988,-0.1022722) -- cycle;
					\path[fill=diskblue!35,draw=diskblue!35,line width=.16pt] (1.592356,0.02629967) -- (0,0) -- (1.427667,-0.1093387) -- cycle;
					\path[fill=rectcoral!75,draw=rectcoral!75,line width=.16pt] (1.211215,-0.2114308) -- (1.086246,-0.130443) -- (0.4266705,-0.2765097) -- (-0.2329049,-0.4225764) -- (-0.107936,-0.5035642) -- cycle;
					\path[fill=conegold!75,draw=conegold!75,line width=.16pt] (0.6248444,-0.4049391) -- (1.211215,-0.2114308) -- (-0.107936,-0.5035642) -- cycle;
					\path[fill=tipcream,draw=tipcream,line width=.16pt] (1.5044,-0.1146767) -- (1.086246,-0.130443) -- (1.211215,-0.2114308) -- cycle;
					\path[fill=diskblue!35,draw=diskblue!35,line width=.16pt] (1.427667,-0.1093387) -- (0,0) -- (0.27834,-0.1803821) -- cycle;
					\path[fill=diskblue!35,draw=diskblue!35,line width=.16pt] (1.086246,-0.130443) -- (0.27834,-0.1803821) -- (0.4266705,-0.2765097) -- cycle;
					\path[fill=rectcoral!75,draw=rectcoral!75,line width=.16pt] (-0.4743263,-0.5528768) -- (-0.2329049,-0.4225764) -- (-0.8242641,-0.2817051) -- (-1.415623,-0.1408337) -- (-1.657045,-0.2711341) -- cycle;
					\path[fill=conegold!75,draw=conegold!75,line width=.16pt] (-1.207107,-0.651502) -- (-0.4743263,-0.5528768) -- (-1.657045,-0.2711341) -- cycle;
					\path[fill=tipcream,draw=tipcream,line width=.16pt] (-0.107936,-0.5035642) -- (-0.2329049,-0.4225764) -- (-0.4743263,-0.5528768) -- cycle;
					\path[fill=collarteal!65,draw=collarteal!65,line width=.16pt] (-0.2329049,-0.4225764) -- (0.4266705,-0.2765097) -- (-0.8242641,-0.2817051) -- cycle;
					\path[fill=diskblue!35,draw=diskblue!35,line width=.16pt] (0.4266705,-0.2765097) -- (0,0) -- (-0.8242641,-0.2817051) -- cycle;
					\path[fill=rectcoral!75,draw=rectcoral!75,line width=.16pt] (-1.882013,-0.08095021) -- (-1.415623,-0.1408337) -- (-1.592356,-0.02629967) -- (-1.769089,0.0882344) -- (-2.138204,0.1356281) -- cycle;
					\path[fill=conegold!75,draw=conegold!75,line width=.16pt] (-2.235479,0.1481179) -- (-1.882013,-0.08095021) -- (-2.331951,0.2994176) -- cycle;
					\path[fill=tipcream,draw=tipcream,line width=.16pt] (-1.657045,-0.2711341) -- (-1.415623,-0.1408337) -- (-1.882013,-0.08095021) -- cycle;
					\path[fill=collarteal!65,draw=collarteal!65,line width=.16pt] (-1.415623,-0.1408337) -- (-0.8242641,-0.2817051) -- (-1.592356,-0.02629967) -- cycle;
					\path[fill=diskblue!35,draw=diskblue!35,line width=.16pt] (-0.8242641,-0.2817051) -- (0,0) -- (-1.592356,-0.02629967) -- cycle;
					\path[fill=rectcoral!75,draw=rectcoral!75,line width=.16pt] (-1.769089,0.0882344) -- (-1.427667,0.1093387) -- (-1.364008,0.1199701) -- (-1.5044,0.1146767) -- (-1.822088,0.0950394) -- cycle;
					\path[fill=conegold!75,draw=conegold!75,line width=.16pt] (-2.107914,-0.05194508) -- (-2.09077,-0.07883162) -- (-2.03692,-0.06106052) -- cycle;
					\path[fill=conegold!75,draw=conegold!75,line width=.16pt] (-2.187243,0.07246808) -- (-2.107914,-0.05194508) -- (-2.03692,-0.06106052) -- (-1.954616,-0.03389924) -- (-2.122513,0.07490869) -- cycle;
					\path[fill=tipcream,draw=tipcream,line width=.16pt] (-2.187243,0.07246808) -- (-2.122513,0.07490869) -- (-2.235479,0.1481179) -- cycle;
					\path[fill=collarteal!65,draw=collarteal!65,line width=.16pt] (-1.769089,0.0882344) -- (-1.592356,-0.02629967) -- (-1.427667,0.1093387) -- cycle;
					\path[fill=diskblue!35,draw=diskblue!35,line width=.16pt] (-1.592356,-0.02629967) -- (0,0) -- (-1.427667,0.1093387) -- cycle;
					\path[fill=rectcoral!75,draw=rectcoral!75,line width=.16pt] (-1.211215,0.2114308) -- (-1.130309,0.1589988) -- (-0.4266705,0.2765097) -- (0.2329049,0.4225764) -- (0.107936,0.5035642) -- cycle;
					\path[fill=conegold!75,draw=conegold!75,line width=.16pt] (-0.6248444,0.4049391) -- (-1.211215,0.2114308) -- (0.107936,0.5035642) -- cycle;
					\path[fill=tipcream,draw=tipcream,line width=.16pt] (-1.5044,0.1146767) -- (-1.364008,0.1199701) -- (-1.130309,0.1589988) -- (-1.211215,0.2114308) -- cycle;
					\path[fill=diskblue!35,draw=diskblue!35,line width=.16pt] (-1.427667,0.1093387) -- (0,0) -- (-0.4266705,0.2765097) -- cycle;
					\path[fill=rectcoral!75,draw=rectcoral!75,line width=.16pt] (0.4743263,0.5528768) -- (0.2329049,0.4225764) -- (0.8242641,0.2817051) -- (1.415623,0.1408337) -- (1.657045,0.2711341) -- cycle;
					\path[fill=conegold!75,draw=conegold!75,line width=.16pt] (1.207107,0.651502) -- (0.4743263,0.5528768) -- (1.657045,0.2711341) -- cycle;
					\path[fill=tipcream,draw=tipcream,line width=.16pt] (0.107936,0.5035642) -- (0.2329049,0.4225764) -- (0.4743263,0.5528768) -- cycle;
					\path[fill=collarteal!65,draw=collarteal!65,line width=.16pt] (0.2329049,0.4225764) -- (-0.4266705,0.2765097) -- (0.8242641,0.2817051) -- cycle;
					\path[fill=diskblue!35,draw=diskblue!35,line width=.16pt] (-0.4266705,0.2765097) -- (0,0) -- (0.8242641,0.2817051) -- cycle;
					\draw[surfaceedge] (1.415623,0.1408337) -- (1.882013,0.08095021);
					\draw[surfaceedge] (1.415623,0.1408337) -- (1.592356,0.02629967);
					\draw[surfaceedge] (1.592356,0.02629967) -- (1.706815,-0.04787744);
					\draw[surfaceedge] (1.822088,-0.0950394) -- (2.235479,-0.1481179);
					\draw[surfaceedge] (1.882013,0.08095021) -- (1.954616,0.03389924);
					\draw[boundaryedge] (2.138204,-0.1356281) -- (2.331951,-0.2994176);
					\draw[boundaryedge] (2.331951,-0.2994176) -- (2.235479,-0.1481179);
					\draw[surfaceedge] (1.415623,0.1408337) -- (1.657045,0.2711341);
					\draw[boundaryedge] (1.657045,0.2711341) -- (1.882013,0.08095021);
					\draw[surfaceedge] (0.8242641,0.2817051) -- (1.415623,0.1408337);
					\draw[surfaceedge] (0.8242641,0.2817051) -- (1.592356,0.02629967);
					\draw[surfaceedge] (0,0) -- (0.8242641,0.2817051);
					\draw[surfaceedge] (0,0) -- (1.592356,0.02629967);
					\draw[surfaceedge] (1.427667,-0.1093387) -- (1.541988,-0.1022722);
					\draw[surfaceedge] (1.086246,-0.130443) -- (1.427667,-0.1093387);
					\draw[surfaceedge] (1.086246,-0.130443) -- (1.5044,-0.1146767);
					\draw[surfaceedge] (1.5044,-0.1146767) -- (2.187243,-0.07246808);
					\draw[boundaryedge] (2.187243,-0.07246808) -- (2.09077,0.07883162);
					\draw[boundaryedge] (1.5044,-0.1146767) -- (2.09077,0.07883162);
					\draw[boundaryedge] (2.235479,-0.1481179) -- (2.187243,-0.07246808);
					\draw[surfaceedge] (1.592356,0.02629967) -- (1.427667,-0.1093387);
					\draw[surfaceedge] (0,0) -- (1.427667,-0.1093387);
					\draw[surfaceedge] (1.086246,-0.130443) -- (1.211215,-0.2114308);
					\draw[surfaceedge] (0.4266705,-0.2765097) -- (1.086246,-0.130443);
					\draw[surfaceedge] (-0.2329049,-0.4225764) -- (0.4266705,-0.2765097);
					\draw[surfaceedge] (-0.2329049,-0.4225764) -- (-0.107936,-0.5035642);
					\draw[surfaceedge] (-0.107936,-0.5035642) -- (1.211215,-0.2114308);
					\draw[boundaryedge] (0.6248444,-0.4049391) -- (1.211215,-0.2114308);
					\draw[boundaryedge] (-0.107936,-0.5035642) -- (0.6248444,-0.4049391);
					\draw[boundaryedge] (1.211215,-0.2114308) -- (1.5044,-0.1146767);
					\draw[surfaceedge] (0.4266705,-0.2765097) -- (0.4266705,-0.2765097);
					\draw[surfaceedge] (0,0) -- (0.4266705,-0.2765097);
					\draw[surfaceedge] (-0.4743263,-0.5528768) -- (-0.2329049,-0.4225764);
					\draw[surfaceedge] (-0.8242641,-0.2817051) -- (-0.2329049,-0.4225764);
					\draw[surfaceedge] (-1.415623,-0.1408337) -- (-0.8242641,-0.2817051);
					\draw[surfaceedge] (-1.657045,-0.2711341) -- (-1.415623,-0.1408337);
					\draw[surfaceedge] (-1.657045,-0.2711341) -- (-0.4743263,-0.5528768);
					\draw[boundaryedge] (-1.207107,-0.651502) -- (-0.4743263,-0.5528768);
					\draw[boundaryedge] (-1.657045,-0.2711341) -- (-1.207107,-0.651502);
					\draw[boundaryedge] (-0.4743263,-0.5528768) -- (-0.107936,-0.5035642);
					\draw[surfaceedge] (-0.8242641,-0.2817051) -- (0.4266705,-0.2765097);
					\draw[surfaceedge] (-0.8242641,-0.2817051) -- (0,0);
					\draw[surfaceedge] (-1.882013,-0.08095021) -- (-1.415623,-0.1408337);
					\draw[surfaceedge] (-1.592356,-0.02629967) -- (-1.415623,-0.1408337);
					\draw[surfaceedge] (-1.769089,0.0882344) -- (-1.592356,-0.02629967);
					\draw[surfaceedge] (-2.138204,0.1356281) -- (-1.769089,0.0882344);
					\draw[surfaceedge] (-2.235479,0.1481179) -- (-1.882013,-0.08095021);
					\draw[boundaryedge] (-2.331951,0.2994176) -- (-1.882013,-0.08095021);
					\draw[boundaryedge] (-2.235479,0.1481179) -- (-2.331951,0.2994176);
					\draw[boundaryedge] (-1.882013,-0.08095021) -- (-1.657045,-0.2711341);
					\draw[surfaceedge] (-1.592356,-0.02629967) -- (-0.8242641,-0.2817051);
					\draw[surfaceedge] (-1.592356,-0.02629967) -- (0,0);
					\draw[surfaceedge] (-2.187243,0.07246808) -- (-2.122513,0.07490869);
					\draw[surfaceedge] (-1.769089,0.0882344) -- (-1.769089,0.0882344);
					\draw[surfaceedge] (-1.769089,0.0882344) -- (-1.427667,0.1093387);
					\draw[surfaceedge] (-1.427667,0.1093387) -- (-1.427667,0.1093387);
					\draw[surfaceedge] (-1.5044,0.1146767) -- (-1.364008,0.1199701);
					\draw[surfaceedge] (-1.822088,0.0950394) -- (-1.5044,0.1146767);
					\draw[boundaryedge] (-2.09077,-0.07883162) -- (-2.187243,0.07246808);
					\draw[boundaryedge] (-2.09077,-0.07883162) -- (-1.954616,-0.03389924);
					\draw[boundaryedge] (-2.187243,0.07246808) -- (-2.235479,0.1481179);
					\draw[surfaceedge] (-1.427667,0.1093387) -- (-1.592356,-0.02629967);
					\draw[surfaceedge] (-1.427667,0.1093387) -- (0,0);
					\draw[surfaceedge] (-1.211215,0.2114308) -- (-1.130309,0.1589988);
					\draw[surfaceedge] (-0.4266705,0.2765097) -- (-0.4266705,0.2765097);
					\draw[surfaceedge] (-0.4266705,0.2765097) -- (0.2329049,0.4225764);
					\draw[surfaceedge] (0.107936,0.5035642) -- (0.2329049,0.4225764);
					\draw[surfaceedge] (-1.211215,0.2114308) -- (0.107936,0.5035642);
					\draw[boundaryedge] (-1.211215,0.2114308) -- (-0.6248444,0.4049391);
					\draw[boundaryedge] (-0.6248444,0.4049391) -- (0.107936,0.5035642);
					\draw[boundaryedge] (-1.5044,0.1146767) -- (-1.211215,0.2114308);
					\draw[surfaceedge] (-1.427667,0.1093387) -- (-0.4266705,0.2765097);
					\draw[surfaceedge] (-0.4266705,0.2765097) -- (0,0);
					\draw[surfaceedge] (0.2329049,0.4225764) -- (0.4743263,0.5528768);
					\draw[surfaceedge] (0.2329049,0.4225764) -- (0.8242641,0.2817051);
					\draw[surfaceedge] (0.4743263,0.5528768) -- (1.657045,0.2711341);
					\draw[boundaryedge] (0.4743263,0.5528768) -- (1.207107,0.651502);
					\draw[boundaryedge] (1.207107,0.651502) -- (1.657045,0.2711341);
					\draw[boundaryedge] (0.107936,0.5035642) -- (0.4743263,0.5528768);
					\draw[surfaceedge] (-0.4266705,0.2765097) -- (0.8242641,0.2817051);
			\end{tikzpicture}}
		\end{minipage}
		\caption{Euclidean orthogonal projections of the disk $D_4$ onto $\operatorname{span}(a,b)$ (left) and $\operatorname{span}(a,b,c)$, shown in an oblique view (right).}
		\label{fig:n4}
	\end{figure}
	
	\begin{figure}[htbp]
		\centering
		\begin{minipage}{.49\textwidth}\centering
			\begin{tikzpicture}[scale=.96,font=\footnotesize,line join=round,line cap=round]
				\path[use as bounding box] (-2.65,-2.65) rectangle (2.4,2.45);
				\path[fill=rectcoral!75,draw=rectcoral!75,line width=.15pt] (0.6,1.6) -- (0.4,1.2) -- (0.8,1.2) -- (1.2,1.2) -- (1.4,1.6) -- cycle;
				\path[fill=conegold!75,draw=conegold!75,line width=.15pt] (1,2) -- (0.6,1.6) -- (1.4,1.6) -- cycle;
				\path[fill=tipcream,draw=tipcream,line width=.15pt] (0.4,1.4) -- (0.4,1.2) -- (0.6,1.6) -- cycle;
				\path[fill=collarteal!65,draw=collarteal!65,line width=.15pt] (0.4,1.2) -- (0,0.8) -- (0.8,1.2) -- cycle;
				\path[fill=diskblue!35,draw=diskblue!35,line width=.15pt] (0,0.8) -- (0,0) -- (0.8,1.2) -- cycle;
				\path[fill=rectcoral!75,draw=rectcoral!75,line width=.15pt] (1.6,1.4) -- (1.2,1.2) -- (1.2,0.8) -- (1.2,0.4) -- (1.6,0.6) -- cycle;
				\path[fill=conegold!75,draw=conegold!75,line width=.15pt] (2,1) -- (1.6,1.4) -- (1.6,0.6) -- cycle;
				\path[fill=tipcream,draw=tipcream,line width=.15pt] (1.4,1.6) -- (1.2,1.2) -- (1.6,1.4) -- cycle;
				\path[fill=collarteal!65,draw=collarteal!65,line width=.15pt] (1.2,1.2) -- (0.8,1.2) -- (1.2,0.8) -- cycle;
				\path[fill=diskblue!35,draw=diskblue!35,line width=.15pt] (0.8,1.2) -- (0,0) -- (1.2,0.8) -- cycle;
				\path[fill=rectcoral!75,draw=rectcoral!75,line width=.15pt] (1.4,0.4) -- (1.2,0.4) -- (0.8,0) -- (0.4,-0.4) -- (0.6,-0.4) -- cycle;
				\path[fill=tipcream,draw=tipcream,line width=.15pt] (1.6,0.6) -- (1.2,0.4) -- (1.4,0.4) -- cycle;
				\path[fill=collarteal!65,draw=collarteal!65,line width=.15pt] (1.2,0.4) -- (1.2,0.8) -- (0.8,0) -- cycle;
				\path[fill=diskblue!35,draw=diskblue!35,line width=.15pt] (1.2,0.8) -- (0,0) -- (0.8,0) -- cycle;
				\path[fill=rectcoral!75,draw=rectcoral!75,line width=.15pt] (0.4,-0.6) -- (0.4,-0.4) -- (0,-0.8) -- (-0.4,-1.2) -- (-0.4,-1.4) -- cycle;
				\path[fill=tipcream,draw=tipcream,line width=.15pt] (0.6,-0.4) -- (0.4,-0.4) -- (0.4,-0.6) -- cycle;
				\path[fill=diskblue!35,draw=diskblue!35,line width=.15pt] (0.8,0) -- (0,0) -- (0,-0.8) -- cycle;
				\path[fill=rectcoral!75,draw=rectcoral!75,line width=.15pt] (-0.6,-1.6) -- (-0.4,-1.2) -- (-0.8,-1.2) -- (-1.2,-1.2) -- (-1.4,-1.6) -- cycle;
				\path[fill=conegold!75,draw=conegold!75,line width=.15pt] (-1,-2) -- (-0.6,-1.6) -- (-1.4,-1.6) -- cycle;
				\path[fill=tipcream,draw=tipcream,line width=.15pt] (-0.4,-1.4) -- (-0.4,-1.2) -- (-0.6,-1.6) -- cycle;
				\path[fill=collarteal!65,draw=collarteal!65,line width=.15pt] (-0.4,-1.2) -- (0,-0.8) -- (-0.8,-1.2) -- cycle;
				\path[fill=diskblue!35,draw=diskblue!35,line width=.15pt] (0,-0.8) -- (0,0) -- (-0.8,-1.2) -- cycle;
				\path[fill=rectcoral!75,draw=rectcoral!75,line width=.15pt] (-1.6,-1.4) -- (-1.2,-1.2) -- (-1.2,-0.8) -- (-1.2,-0.4) -- (-1.6,-0.6) -- cycle;
				\path[fill=conegold!75,draw=conegold!75,line width=.15pt] (-2,-1) -- (-1.6,-1.4) -- (-1.6,-0.6) -- cycle;
				\path[fill=tipcream,draw=tipcream,line width=.15pt] (-1.4,-1.6) -- (-1.2,-1.2) -- (-1.6,-1.4) -- cycle;
				\path[fill=collarteal!65,draw=collarteal!65,line width=.15pt] (-1.2,-1.2) -- (-0.8,-1.2) -- (-1.2,-0.8) -- cycle;
				\path[fill=diskblue!35,draw=diskblue!35,line width=.15pt] (-0.8,-1.2) -- (0,0) -- (-1.2,-0.8) -- cycle;
				\path[fill=rectcoral!75,draw=rectcoral!75,line width=.15pt] (-1.4,-0.4) -- (-1.2,-0.4) -- (-0.8,0) -- (-0.4,0.4) -- (-0.6,0.4) -- cycle;
				\path[fill=tipcream,draw=tipcream,line width=.15pt] (-1.6,-0.6) -- (-1.2,-0.4) -- (-1.4,-0.4) -- cycle;
				\path[fill=collarteal!65,draw=collarteal!65,line width=.15pt] (-1.2,-0.4) -- (-1.2,-0.8) -- (-0.8,0) -- cycle;
				\path[fill=diskblue!35,draw=diskblue!35,line width=.15pt] (-1.2,-0.8) -- (0,0) -- (-0.8,0) -- cycle;
				\path[fill=rectcoral!75,draw=rectcoral!75,line width=.15pt] (-0.4,0.6) -- (-0.4,0.4) -- (0,0.8) -- (0.4,1.2) -- (0.4,1.4) -- cycle;
				\path[fill=tipcream,draw=tipcream,line width=.15pt] (-0.6,0.4) -- (-0.4,0.4) -- (-0.4,0.6) -- cycle;
				\path[fill=diskblue!35,draw=diskblue!35,line width=.15pt] (-0.8,0) -- (0,0) -- (0,0.8) -- cycle;
				\projectiongrid
				\draw[black!40,line width=.25pt] (-2,-1) -- (-1.6,-1.4);
				\draw[black!40,line width=.25pt] (-2,-1) -- (-1.6,-0.6);
				\draw[black!40,line width=.25pt] (-1.6,-1.4) -- (-1.6,-0.6);
				\draw[black!40,line width=.25pt] (-1.6,-1.4) -- (-1.4,-1.6);
				\draw[black!40,line width=.25pt] (-1.6,-1.4) -- (-1.2,-1.2);
				\draw[black!40,line width=.25pt] (-1.6,-0.6) -- (-1.4,-0.4);
				\draw[black!40,line width=.25pt] (-1.6,-0.6) -- (-1.2,-0.4);
				\draw[black!40,line width=.25pt] (-1.4,-1.6) -- (-1.2,-1.2);
				\draw[black!40,line width=.25pt] (-1.4,-1.6) -- (-1,-2);
				\draw[black!40,line width=.25pt] (-1.4,-1.6) -- (-0.6,-1.6);
				\draw[black!40,line width=.25pt] (-1.4,-0.4) -- (-1.2,-0.4);
				\draw[black!40,line width=.25pt] (-1.4,-0.4) -- (-1,0);
				\draw[black!40,line width=.25pt] (-1.4,-0.4) -- (-0.6,0.4);
				\draw[black!40,line width=.25pt] (-1.2,-1.2) -- (-1.2,-0.8);
				\draw[black!40,line width=.25pt] (-1.2,-1.2) -- (-0.8,-1.2);
				\draw[black!40,line width=.25pt] (-1.2,-0.8) -- (-1.2,-0.4);
				\draw[black!40,line width=.25pt] (-1.2,-0.8) -- (-0.8,-1.2);
				\draw[black!40,line width=.25pt] (-1.2,-0.8) -- (-0.8,0);
				\draw[black!40,line width=.25pt] (-1.2,-0.8) -- (0,0);
				\draw[black!40,line width=.25pt] (-1.2,-0.4) -- (-0.8,0);
				\draw[black!40,line width=.25pt] (-1,-2) -- (-0.6,-1.6);
				\draw[black!40,line width=.25pt] (-1,0) -- (-0.6,0.4);
				\draw[black!40,line width=.25pt] (-0.8,-1.2) -- (-0.4,-1.2);
				\draw[black!40,line width=.25pt] (-0.8,-1.2) -- (0,-0.8);
				\draw[black!40,line width=.25pt] (-0.8,-1.2) -- (0,0);
				\draw[black!40,line width=.25pt] (-0.8,0) -- (-0.4,0.4);
				\draw[black!40,line width=.25pt] (-0.8,0) -- (0,0);
				\draw[black!40,line width=.25pt] (-0.8,0) -- (0,0.8);
				\draw[black!40,line width=.25pt] (-0.6,-1.6) -- (-0.4,-1.4);
				\draw[black!40,line width=.25pt] (-0.6,-1.6) -- (-0.4,-1.2);
				\draw[black!40,line width=.25pt] (-0.6,0.4) -- (-0.4,0.4);
				\draw[black!40,line width=.25pt] (-0.6,0.4) -- (-0.4,0.6);
				\draw[black!40,line width=.25pt] (-0.4,-1.4) -- (-0.4,-1.2);
				\draw[black!40,line width=.25pt] (-0.4,-1.4) -- (0,-1);
				\draw[black!40,line width=.25pt] (-0.4,-1.4) -- (0.4,-0.6);
				\draw[black!40,line width=.25pt] (-0.4,-1.2) -- (0,-0.8);
				\draw[black!40,line width=.25pt] (-0.4,0.4) -- (-0.4,0.6);
				\draw[black!40,line width=.25pt] (-0.4,0.4) -- (0,0.8);
				\draw[black!40,line width=.25pt] (-0.4,0.6) -- (0,1);
				\draw[black!40,line width=.25pt] (-0.4,0.6) -- (0.4,1.4);
				\draw[black!40,line width=.25pt] (0,-1) -- (0.4,-0.6);
				\draw[black!40,line width=.25pt] (0,-0.8) -- (0,0);
				\draw[black!40,line width=.25pt] (0,-0.8) -- (0.4,-0.4);
				\draw[black!40,line width=.25pt] (0,-0.8) -- (0.8,0);
				\draw[black!40,line width=.25pt] (0,0) -- (0,0.8);
				\draw[black!40,line width=.25pt] (0,0) -- (0.8,0);
				\draw[black!40,line width=.25pt] (0,0) -- (0.8,1.2);
				\draw[black!40,line width=.25pt] (0,0) -- (1.2,0.8);
				\draw[black!40,line width=.25pt] (0,0.8) -- (0.4,1.2);
				\draw[black!40,line width=.25pt] (0,0.8) -- (0.8,1.2);
				\draw[black!40,line width=.25pt] (0,1) -- (0.4,1.4);
				\draw[black!40,line width=.25pt] (0.4,-0.6) -- (0.4,-0.4);
				\draw[black!40,line width=.25pt] (0.4,-0.6) -- (0.6,-0.4);
				\draw[black!40,line width=.25pt] (0.4,-0.4) -- (0.6,-0.4);
				\draw[black!40,line width=.25pt] (0.4,-0.4) -- (0.8,0);
				\draw[black!40,line width=.25pt] (0.4,1.2) -- (0.4,1.4);
				\draw[black!40,line width=.25pt] (0.4,1.2) -- (0.6,1.6);
				\draw[black!40,line width=.25pt] (0.4,1.2) -- (0.8,1.2);
				\draw[black!40,line width=.25pt] (0.4,1.4) -- (0.6,1.6);
				\draw[black!40,line width=.25pt] (0.6,-0.4) -- (1,0);
				\draw[black!40,line width=.25pt] (0.6,-0.4) -- (1.4,0.4);
				\draw[black!40,line width=.25pt] (0.6,1.6) -- (1,2);
				\draw[black!40,line width=.25pt] (0.6,1.6) -- (1.4,1.6);
				\draw[black!40,line width=.25pt] (0.8,0) -- (1.2,0.4);
				\draw[black!40,line width=.25pt] (0.8,0) -- (1.2,0.8);
				\draw[black!40,line width=.25pt] (0.8,1.2) -- (1.2,0.8);
				\draw[black!40,line width=.25pt] (0.8,1.2) -- (1.2,1.2);
				\draw[black!40,line width=.25pt] (1,0) -- (1.4,0.4);
				\draw[black!40,line width=.25pt] (1,2) -- (1.4,1.6);
				\draw[black!40,line width=.25pt] (1.2,0.4) -- (1.2,0.8);
				\draw[black!40,line width=.25pt] (1.2,0.4) -- (1.4,0.4);
				\draw[black!40,line width=.25pt] (1.2,0.4) -- (1.6,0.6);
				\draw[black!40,line width=.25pt] (1.2,0.8) -- (1.2,1.2);
				\draw[black!40,line width=.25pt] (1.2,1.2) -- (1.4,1.6);
				\draw[black!40,line width=.25pt] (1.2,1.2) -- (1.6,1.4);
				\draw[black!40,line width=.25pt] (1.4,0.4) -- (1.6,0.6);
				\draw[black!40,line width=.25pt] (1.4,1.6) -- (1.6,1.4);
				\draw[black!40,line width=.25pt] (1.6,0.6) -- (1.6,1.4);
				\draw[black!40,line width=.25pt] (1.6,0.6) -- (2,1);
				\draw[black!40,line width=.25pt] (1.6,1.4) -- (2,1);
				\draw[black!85,line width=.65pt] (2,1) -- (1,2) -- (-2,-1) -- (-1,-2) -- cycle;
				\projectionframe{x_1}{x_2}
			\end{tikzpicture}
		\end{minipage}\hfill
		\begin{minipage}{.49\textwidth}\centering
			\begin{tikzpicture}[scale=.96,font=\footnotesize,line join=round,line cap=round]
				\path[use as bounding box] (-2.65,-2.65) rectangle (2.4,2.45);
				\path[fill=rectcoral!75,draw=rectcoral!75,line width=.15pt] (0.6,1.4) -- (0.4,1.2) -- (0.8,0.8) -- (1.2,0.4) -- (1.4,0.6) -- cycle;
				\path[fill=tipcream,draw=tipcream,line width=.15pt] (0.4,1.6) -- (0.4,1.2) -- (0.6,1.4) -- cycle;
				\path[fill=collarteal!65,draw=collarteal!65,line width=.15pt] (0.4,1.2) -- (0,1.2) -- (0.8,0.8) -- cycle;
				\path[fill=diskblue!35,draw=diskblue!35,line width=.15pt] (0,1.2) -- (0,0) -- (0.8,0.8) -- cycle;
				\path[fill=rectcoral!75,draw=rectcoral!75,line width=.15pt] (1.6,0.4) -- (1.2,0.4) -- (1.2,0) -- (1.2,-0.4) -- (1.6,-0.4) -- cycle;
				\path[fill=conegold!75,draw=conegold!75,line width=.15pt] (2,0) -- (1.6,0.4) -- (1.6,-0.4) -- cycle;
				\path[fill=tipcream,draw=tipcream,line width=.15pt] (1.4,0.6) -- (1.2,0.4) -- (1.6,0.4) -- cycle;
				\path[fill=collarteal!65,draw=collarteal!65,line width=.15pt] (1.2,0.4) -- (0.8,0.8) -- (1.2,0) -- cycle;
				\path[fill=diskblue!35,draw=diskblue!35,line width=.15pt] (0.8,0.8) -- (0,0) -- (1.2,0) -- cycle;
				\path[fill=rectcoral!75,draw=rectcoral!75,line width=.15pt] (1.4,-0.6) -- (1.2,-0.4) -- (0.8,-0.8) -- (0.4,-1.2) -- (0.6,-1.4) -- cycle;
				\path[fill=tipcream,draw=tipcream,line width=.15pt] (1.6,-0.4) -- (1.2,-0.4) -- (1.4,-0.6) -- cycle;
				\path[fill=collarteal!65,draw=collarteal!65,line width=.15pt] (1.2,-0.4) -- (1.2,0) -- (0.8,-0.8) -- cycle;
				\path[fill=diskblue!35,draw=diskblue!35,line width=.15pt] (1.2,0) -- (0,0) -- (0.8,-0.8) -- cycle;
				\path[fill=rectcoral!75,draw=rectcoral!75,line width=.15pt] (0.4,-1.6) -- (0.4,-1.2) -- (0,-1.2) -- (-0.4,-1.2) -- (-0.4,-1.6) -- cycle;
				\path[fill=conegold!75,draw=conegold!75,line width=.15pt] (0,-2) -- (0.4,-1.6) -- (-0.4,-1.6) -- cycle;
				\path[fill=tipcream,draw=tipcream,line width=.15pt] (0.6,-1.4) -- (0.4,-1.2) -- (0.4,-1.6) -- cycle;
				\path[fill=collarteal!65,draw=collarteal!65,line width=.15pt] (0.4,-1.2) -- (0.8,-0.8) -- (0,-1.2) -- cycle;
				\path[fill=diskblue!35,draw=diskblue!35,line width=.15pt] (0.8,-0.8) -- (0,0) -- (0,-1.2) -- cycle;
				\path[fill=rectcoral!75,draw=rectcoral!75,line width=.15pt] (-0.6,-1.4) -- (-0.4,-1.2) -- (-0.8,-0.8) -- (-1.2,-0.4) -- (-1.4,-0.6) -- cycle;
				\path[fill=tipcream,draw=tipcream,line width=.15pt] (-0.4,-1.6) -- (-0.4,-1.2) -- (-0.6,-1.4) -- cycle;
				\path[fill=collarteal!65,draw=collarteal!65,line width=.15pt] (-0.4,-1.2) -- (0,-1.2) -- (-0.8,-0.8) -- cycle;
				\path[fill=diskblue!35,draw=diskblue!35,line width=.15pt] (0,-1.2) -- (0,0) -- (-0.8,-0.8) -- cycle;
				\path[fill=rectcoral!75,draw=rectcoral!75,line width=.15pt] (-1.6,-0.4) -- (-1.2,-0.4) -- (-1.2,0) -- (-1.2,0.4) -- (-1.6,0.4) -- cycle;
				\path[fill=conegold!75,draw=conegold!75,line width=.15pt] (-2,0) -- (-1.6,-0.4) -- (-1.6,0.4) -- cycle;
				\path[fill=tipcream,draw=tipcream,line width=.15pt] (-1.4,-0.6) -- (-1.2,-0.4) -- (-1.6,-0.4) -- cycle;
				\path[fill=collarteal!65,draw=collarteal!65,line width=.15pt] (-1.2,-0.4) -- (-0.8,-0.8) -- (-1.2,0) -- cycle;
				\path[fill=diskblue!35,draw=diskblue!35,line width=.15pt] (-0.8,-0.8) -- (0,0) -- (-1.2,0) -- cycle;
				\path[fill=rectcoral!75,draw=rectcoral!75,line width=.15pt] (-1.4,0.6) -- (-1.2,0.4) -- (-0.8,0.8) -- (-0.4,1.2) -- (-0.6,1.4) -- cycle;
				\path[fill=tipcream,draw=tipcream,line width=.15pt] (-1.6,0.4) -- (-1.2,0.4) -- (-1.4,0.6) -- cycle;
				\path[fill=collarteal!65,draw=collarteal!65,line width=.15pt] (-1.2,0.4) -- (-1.2,0) -- (-0.8,0.8) -- cycle;
				\path[fill=diskblue!35,draw=diskblue!35,line width=.15pt] (-1.2,0) -- (0,0) -- (-0.8,0.8) -- cycle;
				\path[fill=rectcoral!75,draw=rectcoral!75,line width=.15pt] (-0.4,1.6) -- (-0.4,1.2) -- (0,1.2) -- (0.4,1.2) -- (0.4,1.6) -- cycle;
				\path[fill=conegold!75,draw=conegold!75,line width=.15pt] (0,2) -- (-0.4,1.6) -- (0.4,1.6) -- cycle;
				\path[fill=tipcream,draw=tipcream,line width=.15pt] (-0.6,1.4) -- (-0.4,1.2) -- (-0.4,1.6) -- cycle;
				\path[fill=collarteal!65,draw=collarteal!65,line width=.15pt] (-0.4,1.2) -- (-0.8,0.8) -- (0,1.2) -- cycle;
				\path[fill=diskblue!35,draw=diskblue!35,line width=.15pt] (-0.8,0.8) -- (0,0) -- (0,1.2) -- cycle;
				\projectiongrid
				\draw[black!40,line width=.25pt] (-2,0) -- (-1.6,-0.4);
				\draw[black!40,line width=.25pt] (-2,0) -- (-1.6,0.4);
				\draw[black!40,line width=.25pt] (-1.6,-0.4) -- (-1.6,0.4);
				\draw[black!40,line width=.25pt] (-1.6,-0.4) -- (-1.4,-0.6);
				\draw[black!40,line width=.25pt] (-1.6,-0.4) -- (-1.2,-0.4);
				\draw[black!40,line width=.25pt] (-1.6,0.4) -- (-1.4,0.6);
				\draw[black!40,line width=.25pt] (-1.6,0.4) -- (-1.2,0.4);
				\draw[black!40,line width=.25pt] (-1.4,-0.6) -- (-1.2,-0.4);
				\draw[black!40,line width=.25pt] (-1.4,-0.6) -- (-1,-1);
				\draw[black!40,line width=.25pt] (-1.4,-0.6) -- (-0.6,-1.4);
				\draw[black!40,line width=.25pt] (-1.4,0.6) -- (-1.2,0.4);
				\draw[black!40,line width=.25pt] (-1.4,0.6) -- (-1,1);
				\draw[black!40,line width=.25pt] (-1.4,0.6) -- (-0.6,1.4);
				\draw[black!40,line width=.25pt] (-1.2,-0.4) -- (-1.2,0);
				\draw[black!40,line width=.25pt] (-1.2,-0.4) -- (-0.8,-0.8);
				\draw[black!40,line width=.25pt] (-1.2,0) -- (-1.2,0.4);
				\draw[black!40,line width=.25pt] (-1.2,0) -- (-0.8,-0.8);
				\draw[black!40,line width=.25pt] (-1.2,0) -- (-0.8,0.8);
				\draw[black!40,line width=.25pt] (-1.2,0) -- (0,0);
				\draw[black!40,line width=.25pt] (-1.2,0.4) -- (-0.8,0.8);
				\draw[black!40,line width=.25pt] (-1,-1) -- (-0.6,-1.4);
				\draw[black!40,line width=.25pt] (-1,1) -- (-0.6,1.4);
				\draw[black!40,line width=.25pt] (-0.8,-0.8) -- (-0.4,-1.2);
				\draw[black!40,line width=.25pt] (-0.8,-0.8) -- (0,-1.2);
				\draw[black!40,line width=.25pt] (-0.8,-0.8) -- (0,0);
				\draw[black!40,line width=.25pt] (-0.8,0.8) -- (-0.4,1.2);
				\draw[black!40,line width=.25pt] (-0.8,0.8) -- (0,0);
				\draw[black!40,line width=.25pt] (-0.8,0.8) -- (0,1.2);
				\draw[black!40,line width=.25pt] (-0.6,-1.4) -- (-0.4,-1.6);
				\draw[black!40,line width=.25pt] (-0.6,-1.4) -- (-0.4,-1.2);
				\draw[black!40,line width=.25pt] (-0.6,1.4) -- (-0.4,1.2);
				\draw[black!40,line width=.25pt] (-0.6,1.4) -- (-0.4,1.6);
				\draw[black!40,line width=.25pt] (-0.4,-1.6) -- (-0.4,-1.2);
				\draw[black!40,line width=.25pt] (-0.4,-1.6) -- (0,-2);
				\draw[black!40,line width=.25pt] (-0.4,-1.6) -- (0.4,-1.6);
				\draw[black!40,line width=.25pt] (-0.4,-1.2) -- (0,-1.2);
				\draw[black!40,line width=.25pt] (-0.4,1.2) -- (-0.4,1.6);
				\draw[black!40,line width=.25pt] (-0.4,1.2) -- (0,1.2);
				\draw[black!40,line width=.25pt] (-0.4,1.6) -- (0,2);
				\draw[black!40,line width=.25pt] (-0.4,1.6) -- (0.4,1.6);
				\draw[black!40,line width=.25pt] (0,-2) -- (0.4,-1.6);
				\draw[black!40,line width=.25pt] (0,-1.2) -- (0,0);
				\draw[black!40,line width=.25pt] (0,-1.2) -- (0.4,-1.2);
				\draw[black!40,line width=.25pt] (0,-1.2) -- (0.8,-0.8);
				\draw[black!40,line width=.25pt] (0,0) -- (0,1.2);
				\draw[black!40,line width=.25pt] (0,0) -- (0.8,-0.8);
				\draw[black!40,line width=.25pt] (0,0) -- (0.8,0.8);
				\draw[black!40,line width=.25pt] (0,0) -- (1.2,0);
				\draw[black!40,line width=.25pt] (0,1.2) -- (0.4,1.2);
				\draw[black!40,line width=.25pt] (0,1.2) -- (0.8,0.8);
				\draw[black!40,line width=.25pt] (0,2) -- (0.4,1.6);
				\draw[black!40,line width=.25pt] (0.4,-1.6) -- (0.4,-1.2);
				\draw[black!40,line width=.25pt] (0.4,-1.6) -- (0.6,-1.4);
				\draw[black!40,line width=.25pt] (0.4,-1.2) -- (0.6,-1.4);
				\draw[black!40,line width=.25pt] (0.4,-1.2) -- (0.8,-0.8);
				\draw[black!40,line width=.25pt] (0.4,1.2) -- (0.4,1.6);
				\draw[black!40,line width=.25pt] (0.4,1.2) -- (0.6,1.4);
				\draw[black!40,line width=.25pt] (0.4,1.2) -- (0.8,0.8);
				\draw[black!40,line width=.25pt] (0.4,1.6) -- (0.6,1.4);
				\draw[black!40,line width=.25pt] (0.6,-1.4) -- (1,-1);
				\draw[black!40,line width=.25pt] (0.6,-1.4) -- (1.4,-0.6);
				\draw[black!40,line width=.25pt] (0.6,1.4) -- (1,1);
				\draw[black!40,line width=.25pt] (0.6,1.4) -- (1.4,0.6);
				\draw[black!40,line width=.25pt] (0.8,-0.8) -- (1.2,-0.4);
				\draw[black!40,line width=.25pt] (0.8,-0.8) -- (1.2,0);
				\draw[black!40,line width=.25pt] (0.8,0.8) -- (1.2,0);
				\draw[black!40,line width=.25pt] (0.8,0.8) -- (1.2,0.4);
				\draw[black!40,line width=.25pt] (1,-1) -- (1.4,-0.6);
				\draw[black!40,line width=.25pt] (1,1) -- (1.4,0.6);
				\draw[black!40,line width=.25pt] (1.2,-0.4) -- (1.2,0);
				\draw[black!40,line width=.25pt] (1.2,-0.4) -- (1.4,-0.6);
				\draw[black!40,line width=.25pt] (1.2,-0.4) -- (1.6,-0.4);
				\draw[black!40,line width=.25pt] (1.2,0) -- (1.2,0.4);
				\draw[black!40,line width=.25pt] (1.2,0.4) -- (1.4,0.6);
				\draw[black!40,line width=.25pt] (1.2,0.4) -- (1.6,0.4);
				\draw[black!40,line width=.25pt] (1.4,-0.6) -- (1.6,-0.4);
				\draw[black!40,line width=.25pt] (1.4,0.6) -- (1.6,0.4);
				\draw[black!40,line width=.25pt] (1.6,-0.4) -- (1.6,0.4);
				\draw[black!40,line width=.25pt] (1.6,-0.4) -- (2,0);
				\draw[black!40,line width=.25pt] (1.6,0.4) -- (2,0);
				\draw[black!85,line width=.65pt] (2,0) -- (0,2) -- (-2,0) -- (0,-2) -- cycle;
				\projectionframe{x_1}{x_3}
			\end{tikzpicture}
		\end{minipage}
		\caption{Coordinate projections of $D_4/h$ onto $(x_1,x_2)$ and
			$(x_1,x_3)$.}
		\label{fig:coordinate-projections}
	\end{figure}
	
	This shows that $\Fill(S_4) \le 11 \pi^2/20$. To prove the matching lower bound, we work on the normalized set $E_4$.
	By \eqref{eq:projection-rectangle},
	\[
	R_1=\{(s,t):|s-t|\le1,\ |s+t|\le3\},\qquad
	R_2=\{(s,t):|s-t|\le2,\ |s+t|\le2\}.
	\]
	On these rectangles define
	\begin{align*}
		f_1(s,t)&\coloneqq
		\begin{cases}
			1,&\min(|s|,|t|)>6/5,\\
			1/2,&\max(|s|,|t|)>6/5\ge\min(|s|,|t|)>4/5,\\
			0,&\text{otherwise},
		\end{cases}\\
		f_2(s,t)&\coloneqq
		\begin{cases}
			1/2,&\max(|s|,|t|)\le6/5\text{ and }\min(|s|,|t|)\le4/5,\\
			0,&\text{otherwise}.
		\end{cases}
	\end{align*}
	Extend $f_1$ and $f_2$ by zero outside $R_1$ and $R_2$, respectively. The normalized calibration on $E_4$ is
	\begin{equation}\label{eq:n4-form}
		\omega_x\coloneqq\sum_{i=1}^4 f_1(x_i,x_{i+1})\,dx_i\wedge dx_{i+1}
		+f_2(x_1,x_3)\,dx_1\wedge dx_3
		+f_2(x_2,x_4)\,dx_2\wedge dx_4,
	\end{equation}
	where $x_5\coloneqq-x_1$.
	
	\begin{figure}[htbp]
		\centering
		\begin{minipage}{.49\textwidth}\centering
			\begin{tikzpicture}[scale=.96,font=\footnotesize]
				\path[use as bounding box] (-2.65,-2.65) rectangle (2.4,2.45);
				\fill[heatnone] (2,1) -- (1,2) -- (-2,-1) -- (-1,-2) -- cycle;
				\begin{scope}
					\clip (2,1) -- (1,2) -- (-2,-1) -- (-1,-2) -- cycle;
					\fill[heatfull] (-1.2,-1.2) -- (-2,-1.2) -- (-2,-2) -- (-1.2,-2) -- cycle;
					\fill[heathalf] (-0.8,-1.2) -- (-1.2,-1.2) -- (-1.2,-2) -- (-0.8,-2) -- cycle;
					\fill[heathalf] (-1.2,-0.8) -- (-2,-0.8) -- (-2,-1.2) -- (-1.2,-1.2) -- cycle;
					\fill[heatfull] (1.2,1.2) -- (2,1.2) -- (2,2) -- (1.2,2) -- cycle;
					\fill[heathalf] (0.8,1.2) -- (1.2,1.2) -- (1.2,2) -- (0.8,2) -- cycle;
					\fill[heathalf] (1.2,0.8) -- (2,0.8) -- (2,1.2) -- (1.2,1.2) -- cycle;
				\end{scope}
				\projectiongrid
				\draw[black!65,line width=.5pt] (2,1) -- (1,2) -- (-2,-1) -- (-1,-2) -- cycle;
				\projectionframe{s}{t}
			\end{tikzpicture}
		\end{minipage}\hfill
		\begin{minipage}{.49\textwidth}\centering
			\begin{tikzpicture}[scale=.96,font=\footnotesize]
				\path[use as bounding box] (-2.65,-2.65) rectangle (2.4,2.45);
				\fill[heatnone] (2,0) -- (0,2) -- (-2,0) -- (0,-2) -- cycle;
				\begin{scope}
					\clip (2,0) -- (0,2) -- (-2,0) -- (0,-2) -- cycle;
					\fill[heathalf] (-0.8,-1.2) -- (0.8,-1.2) -- (0.8,-0.8) -- (1.2,-0.8) -- (1.2,0.8) -- (0.8,0.8) -- (0.8,1.2) -- (-0.8,1.2) -- (-0.8,0.8) -- (-1.2,0.8) -- (-1.2,-0.8) -- (-0.8,-0.8) -- cycle;
				\end{scope}
				\projectiongrid
				\draw[black!65,line width=.5pt] (2,0) -- (0,2) -- (-2,0) -- (0,-2) -- cycle;
				\projectionframe{s}{t}
			\end{tikzpicture}
		\end{minipage}
		\par\smallskip
		\begin{tikzpicture}[font=\footnotesize]
			\filldraw[fill=heatnone,draw=black!30] (0,0) rectangle (.45,.22);
			\node[right] at (.5,.11) {$0$};
			\filldraw[fill=heathalf,draw=black!30] (1.5,0) rectangle (1.95,.22);
			\node[right] at (2,.11) {$\tfrac12$};
			\filldraw[fill=heatfull,draw=black!30] (3.2,0) rectangle (3.65,.22);
			\node[right] at (3.7,.11) {$1$};
		\end{tikzpicture}
		\caption{The two coefficient functions of the four-dimensional calibration: $f_1$ on $R_1$ (left) and $f_2$ on $R_2$ (right).}
		\label{fig:calibration}
	\end{figure}
	
	\begin{Prop}\label{prop:n4-inequality}
		The coefficients of \eqref{eq:n4-form} satisfy, for every $x\in E_4$,
		\[
		\sum_{i=1}^4 f_1(x_i,x_{i+1})+f_2(x_1,x_3)+f_2(x_2,x_4)\le1.
		\]
	\end{Prop}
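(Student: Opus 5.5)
The plan is to reduce the inequality to a finite combinatorial statement about the sizes $|x_i|$, using only the two-dimensional projection constraints already recorded after Proposition~\ref{prop:hull}. All coefficient functions $f_1,f_2$ depend only on $(|s|,|t|)$, apart from the support restriction to $R_1$ or $R_2$. For $x\in E_4$ the pairs $(x_i,x_{i+1})$ lie in $R_1$ and $(x_i,x_{i+2})$ lie in $R_2$, so the support restriction is automatic. Since $x_5=-x_1$ has the same absolute value as $x_1$, the sum is a function of $y_i\coloneqq|x_i|$, $i=1,\dots,4$, arranged cyclically.

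Classify each coordinate as \emph{low} if $y_i\le4/5$, \emph{middle} if $4/5<y_i\le6/5$, and \emph{high} if $y_i>6/5$. In these terms, $f_1(x_i,x_{i+1})$ equals $1$ when both entries are high. It equals $1/2$ when one entry is high and the other middle, and $0$ otherwise. The term $f_2(x_i,x_{i+2})$ equals $1/2$ when neither entry is high and at least one is low, and $0$ otherwise. The key geometric input is that the projection of $E_4$ onto $(x_i,x_{i+2})$ is $R_2$. Hence $|x_i|+|x_{i+2}|\le 2$, and so $y_i>6/5$ forces $y_{i+2}<4/5$: the opposite coordinate is low. In particular at most one of $x_1,x_3$ and at most one of $x_2,x_4$ is high.

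Then I would split by the number of high coordinates, which is $0$, $1$ or $2$.
\begin{itemize}
\item With no high coordinate, every $f_1$ term vanishes, and the two $f_2$ terms contribute at most $1/2+1/2=1$.
\item With exactly one high coordinate, say $x_1$ by the cyclic symmetry $\sigma_4$, which permutes the terms, $x_3$ is low. Then $f_2(x_1,x_3)=0$, and the $f_1$ terms for the pairs $(2,3)$ and $(3,4)$ vanish. The pairs $(1,2)$ and $(4,1)$ contribute $1/2$ exactly when $x_2$, respectively $x_4$, is middle. Meanwhile $f_2(x_2,x_4)=1/2$ only if one of $x_2,x_4$ is low. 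If both are middle the total is $1/2+1/2+0$. Otherwise at most one $f_1$ term is nonzero and the total is at most $1/2+1/2$.
\item With two high coordinates, they are cyclically adjacent, say $x_1,x_2$ (the pair $x_4,x_1$ is handled by symmetry). Then $x_3,x_4$ are both low, both $f_2$ terms vanish, and only $f_1(x_1,x_2)=1$ survives.
\end{itemize}

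The only delicate point I expect is bookkeeping at the thresholds. The strict and nonstrict inequalities in the definitions of $f_1,f_2$ must match the classes above. In addition, "high forces the opposite coordinate to be low" must use the strict bound $2-6/5<4/5$, which holds. I would also check that the cyclic symmetry genuinely permutes the six terms, including the sign convention $x_5=-x_1$, so that the "say $x_1$" reductions are legitimate. If that symmetry argument looks shaky, I would simply repeat the one-high case four times and the two-high case for all four adjacent pairs, each being identical.
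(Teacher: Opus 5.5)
Your proof is correct and follows the paper's argument essentially step for step. Both use the bound $|x_i|+|x_{i+2}|\le2$ (equivalently $(x_i,x_{i+2})\in R_2$) to force the coordinate opposite a high one to be low, then split by the number of high magnitudes to obtain the contributions $1/2+1/2+0$, $1/2+0+1/2$ and $0+0+1/2$, or a single $f_1$ term equal to one; your checks of the strict threshold $2-6/5<4/5$ and of the cyclic symmetry on the magnitudes $|x_i|$ are correct and only make explicit what the paper leaves implicit.
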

	\begin{proof}
		Put $m_i\coloneqq|x_i|$. The defining inequalities of $E_4$ imply
		\[
		m_1+m_3=\max(|x_1-x_3|,|x_1+x_3|)\le2,
		\qquad m_2+m_4\le2.
		\]
		Classify a magnitude as low if it is at most $4/5$, middle if it belongs to
		$(4/5,6/5]$, and high if it exceeds $6/5$. A high magnitude $m_i$ forces $m_{i\pm2}$ to be low, so there are at most two high magnitudes.
		
		If there are none, all four $f_1$ terms vanish and the two $f_2$ terms sum to at most one. If there is exactly one, say $m_1$ is high, then $m_3$ is low. Only the two $f_1$ terms incident to the first coordinate and $f_2(x_2,x_4)$ can contribute. If both $m_2,m_4$ are middle, their contributions are $1/2+1/2+0$; if exactly one is middle, they are $1/2+0+1/2$; and if both are low, they are $0+0+1/2$. Finally, if there are two high magnitudes, they are adjacent and the other two are low. Precisely their common $f_1$ term can contribute, and it equals one.
	\end{proof}
	
	We give the integrals explicitly. The support of $f_2$ is a square of side $8/5$ with four rectangular arms of dimensions $(8/5)\times(2/5)$, all contained
	in $R_2$. Hence
	\[
	\int_{R_2}f_2\,d\cL^2
	=\frac12\left[\left(\frac85\right)^2
	+4\frac85\frac25\right]=\frac{64}{25}.
	\]
	The region where $f_1=1$ consists of two right triangles of leg length $3/5$, with total area $9/25$. The region where $f_1=1/2$ consists of four congruent pieces, each of area
	\[
	\int_{4/5}^{1}(s-1/5)\,ds+
	\int_{1}^{6/5}(9/5-s)\,ds=\frac7{25}.
	\]
	It follows that
	\[
	\int_{R_1}f_1\,d\cL^2=\frac9{25}+\frac12\frac{28}{25}=\frac{23}{25}.
	\]
	Given a filling $T$ of $S_4$, retract it onto $hE_4$ using Proposition~\ref{prop:hull}, without increasing its mass. Propositions~\ref{prop:calibration} and~\ref{prop:n4-inequality}, with the weights rescaled by $h$, give
	\[
	\bM(T)\ge h^2\left(4\frac{23}{25}+2\frac{64}{25}\right)
	=\frac{44}{5}h^2=\frac{11}{20}\pi^2.
	\]
	Together with~\eqref{eq:n4-upper}, this proves~\eqref{eq:main-values} for $n=4$.
	
	\subsection{Lower bounds for fillings of the circle}
	
	
	The calibration of $D_4$ inside the injective hull of its boundary $S_4$ gives the following lower bounds.
	
	\begin{Cor}[Metric fillings]\label{cor:circle-currents}
		Let $X$ be a complete metric space containing an oriented isometric copy $S$ of $\mathbb S^1$. Every metric $2$-current $T$ in $X$ with $\partial T=\curr S$ satisfies
		\[
		\bM(T)\ge\frac{11}{20}\pi^2.
		\]
		More generally, for any complete normed Abelian group $G$ and $g\in G$, every rectifiable $G$-chain $T$ in $X$ with $\partial T=g\curr S$ satisfies
		\begin{equation}\label{eq:circle-group-bound}
			\bM(T)\ge |g|\,\frac{11}{20}\pi^2,
		\end{equation}
		with mass$^\ast$ as in Remark~\ref{rem:coefficients}.
	\end{Cor}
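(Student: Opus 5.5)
The plan is to push any filling forward into $\ell^4_\infty$ along a $1$-Lipschitz extension of $\rho_4$, and then apply the $n=4$ lower bound already proved for fillings of $\Gamma_4$.

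\emph{Extension.} Let $\iota:\mathbb S^1\to S\subset X$ be the oriented isometry, so that $\curr S=\iota_\#\curr{\mathbb S^1}$. The map $\rho_4\circ\iota^{-1}:S\to\ell^4_\infty$ is $1$-Lipschitz, since each coordinate $\pi/2-d(\cdot,\alpha_i)$ is $1$-Lipschitz for the intrinsic distance on $\mathbb S^1$, which agrees with the restricted metric of $X$ on $S$. Because $\ell^4_\infty$ is injective, this map extends coordinatewise, by McShane's formula, to a $1$-Lipschitz map $F:X\to\ell^4_\infty$. Composing with the $1$-Lipschitz retraction of Proposition~\ref{prop:hull}, we may assume that $F$ takes values in $hE_4$.

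\emph{Push-forward and mass bound.} For a metric $2$-current $T$ with $\partial T=\curr S$, the current $F_\#T$ satisfies $\partial F_\#T=F_\#\curr S=(\rho_4)_\#\curr{\mathbb S^1}=\Gamma_4$. By \eqref{eq:local-pushforward}, $\bM(F_\#T)\le\bM(T)$. The calibration argument of Subsection~\ref{subsec:n4} then applies to $F_\#T$: we use Proposition~\ref{prop:calibration} with $E=hE_4$, the rescaled weights $f_1,f_2$, and the coefficient bound of Proposition~\ref{prop:n4-inequality}. This gives $\bM(F_\#T)\ge 11\pi^2/20$, and hence the first claim.

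There is one technical point. Proposition~\ref{prop:calibration} is stated for normal currents, while $T$ may only have finite mass with $\partial T$ of finite mass. But $\partial T=\curr S$ has finite mass, so $T$ is normal whenever $\bM(T)<\infty$, and otherwise the inequality is trivial. If $X$ is not separable, this causes no difficulty either, since Ambrosio--Kirchheim currents have separable support; we can also restrict to the closed separable support.

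\emph{Group coefficients.} For a $G$-chain $T$ with $\partial T=g\curr S$, the same push-forward gives a rectifiable $G$-chain $F_\#T$ in $hE_4$ with boundary $g\Gamma_4$. Mass$^\ast$ mass does not increase under $1$-Lipschitz maps, by the $G$-chain analogue of \eqref{eq:local-pushforward}. We then apply Remark~\ref{rem:coefficients}. Each projection $(\pi_\lambda)_\#\Gamma_4$ bounds $\curr{\theta_\lambda}$ with $\theta_\lambda\in\{0,\pm1\}$ supported on the rectangle $hR_k$, by \eqref{eq:projected-boundary}. By the top-dimensional constancy theorem, the projected multiplicity of $F_\#T$ is therefore $g\theta_\lambda$, with norm $|g|\,|\theta_\lambda|$. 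The integrals computed for $n=4$ then scale by $|g|$, which gives \eqref{eq:circle-group-bound}.

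The main obstacle is to justify the push-forward inequality and the constancy step for general complete normed groups $G$ under mass$^\ast$. I would handle this by appealing directly to Remark~\ref{rem:coefficients} and the cited De Pauw--Hardt results, rather than reproving them. The inequality only requires that Jacobians of $1$-Lipschitz maps contract mass$^\ast$ density, which follows from monotonicity of Finsler volumes.
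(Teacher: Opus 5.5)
Your proposal is correct and follows the paper's proof. You extend $\rho_4$ to a $1$-Lipschitz map $X\to hE_4$ using McShane's formula followed by the retraction of Proposition~\ref{prop:hull}, whereas the paper invokes the injectivity of $hE_4$ directly; these are equivalent, and the rest is the same: push forward to a filling of $\Gamma_4$ (or $g\Gamma_4$) of no greater mass, then apply the $n=4$ calibration together with Remark~\ref{rem:coefficients}.
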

	\begin{proof}
		Identify $S$ with $\mathbb S^1$. By injectivity, $\rho_4:S\to h_4E_4$ extends to a $1$-Lipschitz map $F:X\to h_4E_4$. Thus $F_\#T$ has boundary $\Gamma_4$, or $g\Gamma_4$ in the coefficient case, and no greater mass. Apply the calibration of $D_4$ and Remark~\ref{rem:coefficients}.
	\end{proof}
	
	\begin{Cor}[Riemannian fillings]\label{cor:lower_bound}
		Let $\Sigma$ be a compact Riemannian surface whose boundary, with the distance induced from $\Sigma$, is isometric to $\mathbb S^1$. Then
		\[
		\Area(\Sigma)\ge\frac{11}{20}\pi^2=5.428282\ldots.
		\]
	\end{Cor}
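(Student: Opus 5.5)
We reduce Corollary~\ref{cor:lower_bound} to Corollary~\ref{cor:circle-currents} by turning the Riemannian surface into a metric $2$-chain whose mass is at most $\Area(\Sigma)$. Let $X$ be $\Sigma$ with its intrinsic length metric. This is a complete (indeed compact) metric space, and by hypothesis $\partial\Sigma$ is an isometric copy $S$ of $\mathbb S^1$. By injectivity of $h_4E_4$ (Proposition~\ref{prop:hull}), the map $\rho_4\colon S\to h_4E_4$ extends to a $1$-Lipschitz map $F\colon\Sigma\to h_4E_4\subset\ell^4_\infty$. We may smooth $F$ slightly, or simply work with the Lipschitz map, since Lipschitz push-forwards of currents are available.

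If $\Sigma$ is orientable, orient it compatibly with $S$. Then $\curr\Sigma$ is an integral $2$-current with $\partial\curr\Sigma=\curr S$. Its Ambrosio--Kirchheim mass equals the mass$^\ast$ area. On a Riemannian surface, every Finsler volume, and in particular mass$^\ast$, agrees with Riemannian area, as noted in Section~\ref{subsec:mass}. Hence $\bM(\curr\Sigma)=\Area(\Sigma)$, and Corollary~\ref{cor:circle-currents} applied with $X=\Sigma$ gives the bound. An orientation of $S$ is needed, but either choice works: reversing it replaces $T$ by $-T$ without changing mass.

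The main obstacle is the nonorientable case, where $\curr\Sigma$ is not an integral current with boundary $\curr S$. Here I would use the coefficient version \eqref{eq:circle-group-bound} with $G=\Z/2\Z$, normed by $|1|=1$. The fundamental class of $\Sigma$ relative to its boundary exists mod $2$, so $\Sigma$ defines a rectifiable $\Z/2$-chain $T$ with $\partial T=1\cdot\curr S$. Its mass is $\int|1|\,d\mu=\Area(\Sigma)$, again because mass$^\ast$ agrees with Riemannian area on inner product tangent planes. Remark~\ref{rem:coefficients} then yields $\bM(T)\ge\frac{11}{20}\pi^2$. The points to check are that the De Pauw--Hardt boundary of the mod $2$ chain of a compact surface with boundary is the boundary curve with coefficient $1$, and that no orientation is needed for the push-forward under $F$.

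As an alternative avoiding coefficient groups, one could pass to the orientation double cover $\tilde\Sigma$. Its boundary is two copies of $S$ and its area is $2\Area(\Sigma)$. However, the two boundary circles need not be jointly isometric to a configuration that the argument handles, so the $\Z/2$ route is the one I would commit to.
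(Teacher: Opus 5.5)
Your proposal is correct and follows the paper's proof. In the orientable case you apply Corollary~\ref{cor:circle-currents} with $X=\Sigma$ to the fundamental integral current, whose mass equals $\Area(\Sigma)$ because mass$^\ast$ agrees with Riemannian area. In the nonorientable case you use the fundamental $\Z/2\Z$-chain with the coefficient bound \eqref{eq:circle-group-bound}; the extension of $\rho_4$ by injectivity is already part of Corollary~\ref{cor:circle-currents} and can simply be cited.
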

	\begin{proof}
		If $\Sigma$ is orientable, its fundamental integral current has mass $\Area(\Sigma)$ and boundary $\curr{\partial\Sigma}$, with the induced orientation. Apply Corollary~\ref{cor:circle-currents} with $X=\Sigma$. If $\Sigma$ is nonorientable, use its fundamental $\Z/2\Z$-chain instead.
	\end{proof}
	
	\section{Linear programs}\label{sec:lp}
	
	Fix $n\ge2$. Consider the following optimization problem over bounded Borel functions
	$f_k:R_k\to\R$, $1 \le k < n$, extended by zero outside $R_k$:
	\begin{align}
		\text{maximize}\quad&
		A_n(f)\coloneqq
		\sum_{k=1}^{n-1}(n-k)\int_{R_k}f_k\,d\cL^2
		\label{eq:lp-objective}\\
		\text{subject to}\quad&
		\sum_{1\le i<j\le n}f_{j-i}(x_i,x_j)\le1
		&&\text{for all }x\in E_n,\label{eq:lp-pointwise}\\
		&f_k(s,t)\ge0
		&&\text{for all }1\le k<n,\ (s,t)\in R_k.\notag
	\end{align}
	
	This is an infinite linear program. Every feasible family gives
	\begin{equation}\label{eq:lp-bound}
		\Fill(S_n)\ge\left(\frac{\pi}{n}\right)^2 A_n(f),
	\end{equation}
	and the same lower bound for the fillings of $\mathbb S^1$ in
	Corollaries~\ref{cor:circle-currents} and~\ref{cor:lower_bound}.
	
	Averaging over the signed cyclic shifts and reflections of $E_n$
	allows us to impose the symmetries
	\[
	f_k(s,t)=f_k(t,s)=f_k(-s,-t),
	\qquad
	f_{n-k}(s,t)=f_k(t,-s)
	\]
	without changing the supremum. These identities reduce the number of
	independent functions and the domains on which they must be specified.
	
	To obtain a finite program, choose a symmetric grid
	\[
	-n/2=t_0<t_1<\cdots<t_N=n/2,
	\qquad t_{N-a}=-t_a,
	\]
	and put $I_a\coloneqq(t_{a-1},t_a)$. For $1\le k<n$ and
	$1\le a,b\le N$, set
	\[
	A_{k,a,b}\coloneqq
	\cL^2\bigl(R_k\cap(I_a\times I_b)\bigr),
	\qquad
	\mathcal V\coloneqq\{(k,a,b):A_{k,a,b}>0\}.
	\]
	For each $(k,a,b)\in\mathcal V$, the variable $f_{k,a,b}$ denotes the
	constant value of $f_k$ on $R_k\cap(I_a\times I_b)$.
	We set the functions to zero on all grid lines.
	
	For $\mathbf a=(a_1,\ldots,a_n)\in\{1,\ldots,N\}^n$, set
	$B_{\mathbf a}\coloneqq\prod_{i=1}^n I_{a_i}$ and define
	\begin{equation}\label{eq:lp-box-test}
		\mathcal B\coloneqq
		\bigl\{\mathbf a\in\{1,\ldots,N\}^n:
		(j-i,a_i,a_j)\in\mathcal V
		\text{ for all }1\le i<j\le n\bigr\}.
	\end{equation}
	By Lemma~\ref{lem:lp-boxes} below, this filter keeps exactly the boxes meeting $E_n$. By definition, every projected cell of a retained box automatically belongs to $\mathcal V$.
	
	The finite linear program associated with the chosen grid is
	\begin{equation}\label{eq:lp-finite}
		\begin{aligned}
			\text{maximize}\quad&
			\sum_{(k,a,b)\in\mathcal V}
			(n-k)A_{k,a,b}\,f_{k,a,b}\\
			\text{subject to}\quad&
			\sum_{1\le i<j\le n}f_{j-i,a_i,a_j}\le1
			&&\text{for all }\mathbf a\in\mathcal B,\\
			&f_{k,a,b}\ge0
			&&\text{for all }(k,a,b)\in\mathcal V.
		\end{aligned}
	\end{equation}
	Its objective equals $A_n(f)$ for the associated step functions.
	The constraints imply \eqref{eq:lp-pointwise} away from the grid hyperplanes.
	At a point on a grid hyperplane, approximate from within $E_n$ by points off all grid hyperplanes. Every nonzero term at the original point persists under sufficiently small perturbations, while the additional terms are nonnegative. Thus \eqref{eq:lp-pointwise} holds there as well, and every feasible solution gives the lower bound~\eqref{eq:lp-bound}.
	
	The symmetry reductions become linear equalities. With
	$\bar a\coloneqq N+1-a$, they read
	\[
	f_{k,a,b}=f_{k,b,a}=f_{k,\bar a,\bar b},
	\qquad
	f_{n-k,a,b}=f_{k,b,\bar a}.
	\]
	
	There is also a symmetry reduction of the constraints. The map
	\[
	\widehat\sigma_n(a_1,\ldots,a_n)
	\coloneqq(\bar a_n, a_1,\ldots,a_{n-1})
	\]
	preserves $\mathcal B$. Under the preceding equalities, the constraints
	indexed by $\mathbf a$ and $\widehat\sigma_n(\mathbf a)$ coincide.
	It therefore suffices to retain one representative of each orbit.
	Reversing the indices $(a_1,\ldots,a_n)$ gives a further symmetry of
	the constraints.
	
	
	It remains to justify the box filter.
	
	\begin{Lem}\label{lem:lp-boxes}
		\[
		\mathbf a\in\mathcal B
		\quad\Longleftrightarrow\quad
		B_{\mathbf a}\cap E_n\ne\varnothing.
		\]
	\end{Lem}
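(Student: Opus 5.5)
Both directions reduce to understanding $E_n$ in terms of the pairwise projections.

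The direction ($\Leftarrow$) is immediate. Suppose $x\in B_{\mathbf a}\cap E_n$. For each $i<j$, the point $(x_i,x_j)$ lies in $I_{a_i}\times I_{a_j}$. It also lies in $R_{j-i}$, because the projection of $E_n$ onto $(x_i,x_j)$ is contained in $R_{j-i}$; this was shown after Proposition~\ref{prop:hull} by summing neighbouring bounds along the two arcs of the $2n$-cycle.

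Positivity of $A_{j-i,a_i,a_j}$ needs a little more than a common point. Since $E_n$ is a full-dimensional convex set and $B_{\mathbf a}$ is open, $B_{\mathbf a}\cap E_n$ is a nonempty relatively open subset of $E_n$ and so contains interior points of $E_n$. Choose $x$ to be such a point. Then a small ball around $x$ lies in $B_{\mathbf a}\cap E_n$. Its projection is an open disk contained in $R_{j-i}\cap(I_{a_i}\times I_{a_j})$, which forces $A_{j-i,a_i,a_j}>0$.

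For ($\Rightarrow$), the plan is to exhibit $E_n$ as a set cut out by pairwise difference-type constraints and then apply a Helly/shortest-path argument. After the substitution $y_p=x_p$ for $1\le p\le n$ and $y_{p+n}=-x_p$, the set $E_n$ is the set of antisymmetric $1$-Lipschitz functions on the $2n$-cycle. I want to find a point of $E_n$ in a prescribed open box given only pairwise information. The feasibility system consists of:
\begin{itemize}
\item interval constraints $x_i\in I_{a_i}$;
\item constraints $|x_i-x_{i+1}|\le1$;
\item the constraint $|x_n+x_1|\le1$.
\end{itemize}
All of these are two-variable constraints of the form $\pm x_i\pm x_j\le c$.

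For such "octagonal" systems (unit two-variable inequalities), feasibility is decided by closure. Form the tightened system obtained by adding all implied two-variable inequalities, that is, the shortest-path closure in the signed doubled graph. The system is feasible iff every pairwise projection of the closure is consistent. This is the standard result for octagon/difference constraints, proved via negative cycles in the doubled graph and Fourier–Motzkin elimination.

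Now compute the closed pairwise constraints between $x_i$ and $x_j$ that are implied by the neighbour bounds alone. They are exactly $|x_i-x_j|\le j-i$ and $|x_i+x_j|\le n-(j-i)$, because shortest paths in the doubled cycle realize these distances. In other words, they are exactly $R_{j-i}$. Adding the box constraints then gives pairwise projections $R_{j-i}\cap(\bar I_{a_i}\times\bar I_{a_j})$, and these are nonempty, with positive area, by hypothesis.

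To handle openness, I would run the argument with slightly shrunk closed intervals $[t_{a-1}+\varepsilon,t_a-\varepsilon]$. Since the rectangles meet the open cells in positive area, they also meet the shrunken cells for small $\varepsilon$, and there are finitely many pairs.

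The main obstacle is that the closure must also account for paths that pass through the box constraints of intermediate variables. A negative cycle could combine several box bounds with chains of neighbour bounds, so the closure is not simply $R_{j-i}$ plus boxes. I would handle this by checking negative cycles directly. Any cycle in the doubled constraint graph can be shortcut into segments, each of which uses box bounds at most at its two endpoints variables, with neighbour-bound paths in between. Each such segment is dominated by the constraint $R_{j-i}$ combined with the two box intervals. A negative cycle therefore reduces to inconsistency of a single pair $(i,j)$, or of a single variable with itself via $x_i+x_{i+n}=0$ (which is covered by $|x_i|\le n/2$ and the box range). Both are excluded by the hypothesis, so the system is feasible and $B_{\mathbf a}\cap E_n\ne\varnothing$.
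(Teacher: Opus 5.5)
Your proof is correct. The direction ($\Leftarrow$) is essentially the paper's. For ($\Rightarrow$) you take the dual route.

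The paper imposes antisymmetry only at the last step. On the $2n$-cycle with reflected intervals $(\ell_p,u_p)$, the box constraints are difference constraints against a single reference value, so the pairwise inequalities $\ell_q-d(p,q)<u_p$ suffice. The explicit potential $z_p=\max_q\{\ell_q-d(p,q)\}$ is then a $1$-Lipschitz function with $\ell_p\le z_p<u_p$, and the average $y_p=(z_p-z_{p+n})/2$ restores antisymmetry.

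You instead build antisymmetry into the doubled graph from the start. This turns each box bound into an antipodal edge $\bar p\to p$ of weight $2u_p$, so a cycle may use many such edges. That is exactly the obstacle you flag, and it comes from this choice. Your resolution works, but the key step is only implicit in the phrase ``box bounds at its two endpoint variables''; make it explicit. Write the cycle as edges $\bar p_r\to p_r$ alternating with neighbour paths $p_r\to\bar p_{r+1}$. Then use $2u_{p_r}=u_{p_r}-\ell_{\bar p_r}$ to regroup its weight as
$\sum_r\bigl(u_{p_r}+d(p_r,\bar p_{r+1})-\ell_{\bar p_{r+1}}\bigr)$.
Each summand is nonnegative: by consistency of the pair $(p_r,\bar p_{r+1})$ in general, and by $\ell_p<u_p$ or $-n/2<u_p$ in the degenerate cases $p_r=\bar p_{r+1}$ and $p_{r+1}=p_r$. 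These are exactly the three cases the paper treats.

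The octagon theorem you invoke (no negative cycle implies a real solution) is itself proved by averaging a potential $\phi$ on the doubled graph with $p\mapsto-\phi(\bar p)$, which is the paper's final step. So the paper's argument is shorter, constructive and self-contained, and it handles the open cells by strict inequalities without shrinking. Yours places the lemma inside the general feasibility theory of two-variable inequality systems, at the cost of importing that theorem and doing the cycle bookkeeping.
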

	
	\begin{proof}
		If $B_{\mathbf a}\cap E_n$ is nonempty, moving an intersection point
		slightly towards the origin gives
		$y\in B_{\mathbf a}\cap\operatorname{int}E_n$.
		Extend the coordinates by $y_{r+n}=-y_r$.
		The neighbouring inequalities are strict, so for $1\le i<j\le n$,
		\[
		\begin{aligned}
			|y_i-y_j|
			&\le\sum_{r=i}^{j-1}|y_{r+1}-y_r|<j-i,\\
			|y_i+y_j|=|y_j-y_{i+n}|
			&\le\sum_{r=j}^{i+n-1}|y_{r+1}-y_r|<n-(j-i).
		\end{aligned}
		\]
		Thus $(y_i,y_j)$ lies in the interior of $R_{j-i}$ and in the open
		rectangle $I_{a_i}\times I_{a_j}$. Their intersection therefore has positive area, and $\mathbf a\in\mathcal B$.
		
		Conversely, suppose $\mathbf a\in\mathcal B$. Write $I_{a_i}=(\ell_i,u_i)$ and extend the intervals by reflection to the indices in $\{n+1,\dots,2n\}$:
		\[
		\ell_{i+n}=-u_i,\qquad u_{i+n}=-\ell_i.
		\]
		We seek values in these intervals which change by at most $1$ between neighbouring indices on the $2n$-cycle and satisfy $y_{i+n}=-y_i$.
		
		Regard the indices $1,\ldots,2n$ as vertices of a cycle with unit edge lengths, and let
		\[
		d(p,q)\coloneqq\min\{|p-q|,2n-|p-q|\}.
		\]
		For $i<j$, we have $d(i,j)=j-i$ and $d(i,j+n)=n-(j-i)$.
		Thus \eqref{eq:projection-rectangle} reads
		\[
		R_{j-i}
		=\bigl\{(s,t): |s-t|\le d(i,j),\
		|s+t|\le d(i,j+n)\bigr\}.
		\]
		The second inequality is a difference bound between $s$ and the
		reflected value $-t$.
		
		Since every projected cell is nonempty, reflecting either or both
		coordinates shows that, for distinct nonantipodal indices $p,q$,
		there exist
		\[
		s\in(\ell_p,u_p),\qquad t\in(\ell_q,u_q),
		\qquad |s-t|\le d(p,q).
		\]
		Hence
		\[
		\ell_q<t\le s+d(p,q)<u_p+d(p,q).
		\]
		For $p=q$, the inequality $\ell_q<u_p$ is immediate.
		For $|p-q|=n$, we have $\ell_q=-u_p$ and $u_p>-n/2$,
		so $\ell_q<u_p+n$. Thus in all cases,
		\[
		\ell_q-d(p,q)<u_p
		\qquad\text{for all }1\le p,q\le2n.
		\]
		A lower bound $\ell_q$ at index $q$ forces the lower bound $\ell_q-d(p,q)$ at index $p$ for any sequence satisfying the neighbouring inequalities. Taking all these bounds into account, set
		\[
		z_p\coloneqq\max_{1\le q\le2n}\{\ell_q-d(p,q)\}.
		\]
		The preceding inequalities give $\ell_p\le z_p<u_p$. Each function $p\mapsto\ell_q-d(p,q)$ is $1$-Lipschitz with respect to $d$, so their maximum is also $1$-Lipschitz.
		
		Finally, enforce antiperiodicity by averaging:
		\[
		y_p\coloneqq\frac{z_p-z_{p+n}}2,
		\]
		with indices understood modulo $2n$. Both sequences being averaged are $1$-Lipschitz with respect to $d$, and $y_{p+n}=-y_p$. Moreover,
		\[
		z_p\in[\ell_p,u_p),
		\qquad -z_{p+n}\in(\ell_p,u_p].
		\]
		Thus $\ell_p<y_p<u_p$. Consequently, $(y_1,\ldots,y_n) \in B_{\mathbf a} \cap E_n$.
	\end{proof}
	
	For $n=4$, the grid
	\[
	(t_0,\ldots,t_5)
	=\left(-2,-\frac65,-\frac45,\frac45,\frac65,2\right)
	\]
	yields the optimal value $A_4=44/5$, recovering the lower bound $\Fill(S_4) \ge 11\pi^2/20$ established above.
	
	For $n=6$, numerical optimization on the uniform grid of mesh $1/4$
	suggests a feasible value $A_6=323/16$, corresponding to a lower
	bound of $323\pi^2/576\approx5.53452$ for $\Fill(S_6)$. We report this as numerical evidence; the bound is not proved here. Further optimization we leave to those with more compute.
	
	\section{On Question~\ref*{que:polyhedral}}\label{sec:polyhedral}
	
	For a prescribed polyhedral boundary in $\ell^n_\infty$, an
	integral mass minimizer exists by compactness and lower
	semicontinuity~\cite{AK}. Question~\ref{que:polyhedral} asks whether
	such a minimizer can always be chosen polyhedral.
	
	For $m=n$, the cone construction~\cite[4.1.11]{F} gives a
	polyhedral filling of the prescribed boundary. By the constancy
	theorem~\cite[4.1.7]{F}, this is the unique integral filling in
	$\R^n$, and hence mass minimizing.
	
	The case $m=1$ follows from Kantorovich--Rubinstein duality;
	see~\cite[Theorem~1.14]{Vi}. The least transport cost from the
	negative part of the prescribed boundary $S$ to its positive
	part equals $\sup_{\Lip(\varphi)\le1}S(\varphi)$, which bounds
	the mass of every filling from below. For integer coefficients,
	an optimal transport can be chosen as a matching of points
	repeated according to their multiplicities. Joining matched
	points by oriented straight segments gives a polyhedral filling
	attaining this bound.
	
	Both arguments apply to every norm and every Finsler volume.
	For the intermediate dimensions $1<m<n$, we return to mass$^\ast$
	in $\ell^n_\infty$. As a step towards answering
	Question~\ref{que:polyhedral}, we give a local minimality criterion
	for the union of two $m$-dimensional half-spaces with a common boundary.
	
	\begin{Prop}\label{prop:touching_subspaces}
		Let \(H_1,H_2\subset\ell^n_\infty\) be \(m\)-dimensional half-spaces with
		\[
		H_1\cap H_2=\partial H_1=\partial H_2,
		\qquad
		T\coloneqq H_1\cup H_2.
		\]
		Then \(T\) is locally mass\(^{\ast}\) minimizing under compactly supported Lipschitz deformations if and only if some coordinate projection restricts to a bijective, mass\(^{\ast}\)-preserving map
		\[
		\pi_\lambda:T\longrightarrow\ell^m_\infty.
		\]
	\end{Prop}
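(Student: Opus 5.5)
The easy direction (``if'') is a direct calibration argument. Suppose some $\pi_\lambda$ restricts to a bijective, mass$^\ast$-preserving map $T\to\ell^m_\infty$. Bijectivity means $\pi_\lambda(H_1)$ and $\pi_\lambda(H_2)$ are the two complementary half-spaces of $\ell^m_\infty$ bounded by $\pi_\lambda(\partial H_1)$. Mass preservation means $|dx_\lambda|$ attains the maximum in \eqref{eq:mass-density} on both planes. Orient $T$ so that $\pi_\lambda$ is orientation preserving. Then the constant form $dx_\lambda$ calibrates $T$.

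Now let $T'$ be a compactly supported Lipschitz deformation of $T$, agreeing with $T$ outside a ball $B$. The difference $(T-T')\llcorner$ near $B$ is a compactly supported cycle, so $T\llcorner B'$ and $T'\llcorner B'$ have equal boundaries for a slightly larger ball $B'$. Since $dx_\lambda$ is closed and $|dx_\lambda(\xi)|\le\mu(\xi)$, we get
\[
\bM(T\llcorner B')=T\llcorner B'(dx_\lambda)=T'\llcorner B'(dx_\lambda)\le\bM(T'\llcorner B').
\]
This is exactly Proposition~\ref{prop:calibration} with a single constant weight $f_\lambda=1$.

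For the ``only if'' direction I argue by contraposition. Let $V_1,V_2$ be the $m$-planes spanned by $H_1,H_2$, and let $L=\partial H_1$, an $(m-1)$-plane. For each plane let $\Lambda_i\subset\Lambda(n,m)$ be the set of $\lambda$ attaining the maximum of $|dx_\lambda|$ on $V_i$. If some $\lambda\in\Lambda_1\cap\Lambda_2$ gives projections of $H_1,H_2$ lying on opposite sides, we are in the good case. Otherwise, for every $\lambda\in\Lambda_1\cap\Lambda_2$ the two projections fold, i.e.\ overlap on the same side of $\pi_\lambda(L)$, or $\Lambda_1\cap\Lambda_2=\varnothing$. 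In either case I construct a competitor with smaller mass.

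The deformation is a \emph{bending}: replace $T$ near a compact piece $K\subset L$ by a thin strip. Choose a vector $u$ and move $L\cap(\text{ball})$ by $\varepsilon u$ with a Lipschitz cutoff. This is the same as replacing a wedge neighbourhood of the crease by two nearby flat pieces with a new crease $L+\varepsilon u$, plus small transition regions near the ends of the cutoff. The first variation of mass$^\ast$ is computed face by face via \eqref{eq:simplex-mass}. The mass of a face is the maximum of finitely many linear-in-$\varepsilon$ minors, so its derivative at $\varepsilon=0^+$ is a max over the active set $\Lambda_i$.

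Moving the crease by $u$ pointing into $H_1$ (projected) removes a strip of $H_1$ of area proportional to $\varepsilon$. It adds a strip spanned by $L$ and $u$, oriented so as to continue $H_2$. The net first variation is
\[
\delta(u)=-\mu(\xi_L\wedge u_1)+\mu(\xi_L\wedge u),
\]
where $u_1$ is the component of $u$ used in $H_1$. A one-sided derivative computation using maxima over $\Lambda_1,\Lambda_2$ and over the new plane then has to be done. Taking $u\in V_2$ pointing into $H_2$, or a combination, I aim to show some choice gives $\delta<0$ exactly when no common maximizing $\lambda$ unfolds $T$. The key identity should be that $\mu$ of the new plane $\operatorname{span}(L,u)$ is at most the max over $\lambda$ of $|dx_\lambda|$. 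For $u$ a suitable convex combination of directions in $V_1$ and $V_2$, this becomes strictly less than the sum of contributions unless a single $\lambda$ serves both sides. This is a ``triangle inequality for minors'': $|dx_\lambda(\xi_L\wedge(u+w))|\le|dx_\lambda(\xi_L\wedge u)|+|dx_\lambda(\xi_L\wedge w)|$, with equality in the max only if one $\lambda$ is simultaneously maximal with matching signs.

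The main obstacle is this ``only if'' variation: showing strict decrease at first order. A first-order gain only when the max is attained by incompatible $\lambda$'s may be degenerate. For instance, it may be zero at first order when $\Lambda_1\cap\Lambda_2\neq\varnothing$ but the common $\lambda$ folds. I would then try the ``shortcut'' competitor directly. When $\pi_\lambda$ folds, the two half-planes near the crease project onto overlapping regions, and projecting both pieces of the strip through $p_{V}$ for the plane $V=\operatorname{span}(L,u)$ yields a cancellation in the projected multiplicity. This gives a strict loss computed by \eqref{eq:total-convexity} applied to the new plane. That the deformation is Lipschitz and compactly supported is immediate from the construction.
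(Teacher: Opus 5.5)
Your ``if'' direction is fine. Calibrating by the constant closed form $dx_\lambda$ is a valid alternative to the paper's argument. The paper instead shows that $\pi_\lambda\circ\varphi\circ(\pi_\lambda|_T)^{-1}$ has degree one, hence $\pi_\lambda(T\cap U)\subset\pi_\lambda(\varphi(T)\cap U)$, and compares volumes of sets.

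The gap is in the ``only if'' direction: you never produce a competitor with strictly smaller mass. Your formula $\delta(u)=-\mu(\xi_L\wedge u_1)+\mu(\xi_L\wedge u)$ does not describe an actual deformation. If the crease moves to $L+\varepsilon u$ while the far parts of $H_1,H_2$ stay fixed, the adjacent strip on at least one side must tilt, e.g.\ to $Q+[\varepsilon u,v]$. The change in mass is therefore a one-sided directional derivative of $F(z)\coloneqq\mu(e\wedge z)$, where $e$ is the $k$-vector of a basis of $L$ and $k=m-1$. It is not $\mu(\xi_L\wedge u)$; for $u\in V_1$ your expression vanishes identically. You then leave the strict-decrease computation undone and anticipate a first-order degeneracy. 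The fallback ``shortcut competitor'', with its ``cancellation in projected multiplicity'', is never defined, and the cost of the cutoff region is never estimated.

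The missing observation is that your ``triangle inequality for minors'' is the entire proof. Write $H_1=L+\R_+u$ and $H_2=L+\R_+v$ with $L$ linear, and let $F(z)=\max_\lambda|dx_\lambda(e\wedge z)|$, which is a seminorm.

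\emph{Equality case.} Suppose $F(u-v)=F(u)+F(v)$. Pick $\lambda$ and a sign $\varepsilon$ with $\varepsilon\,dx_\lambda(e\wedge(u-v))=F(u-v)$. Then the bounds $\varepsilon\,dx_\lambda(e\wedge u)\le F(u)$ and $-\varepsilon\,dx_\lambda(e\wedge v)\le F(v)$ must both be equalities. So $\lambda$ is maximal on both planes with opposite signs, and $\pi_\lambda$ unfolds $T$. Consequently, under your contrapositive hypothesis, $\delta\coloneqq F(u)+F(v)-F(u-v)>0$, and the degeneracy you fear cannot occur.

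\emph{Strict case.} The paper then cuts the corner by a finite replacement. Let $Q_R\subset L$ be a cube of side $2R$ and consider the prism $Q_R+\conv\{v,0,u\}$. Replace the facets $(Q_R+[0,u])\cup(Q_R+[0,v])$, of volume $(2R)^k(F(u)+F(v))$, by $(Q_R+[v,u])\cup(\partial Q_R+\conv\{v,0,u\})$. The new piece has volume at most $(2R)^kF(u-v)+CR^{k-1}$, which is smaller for large $R$, and rescaling makes the deformation local.

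Your bending idea could also be completed. Keep $H_1$ and move the crease along $u$ over $Q_R$. The bulk volume then changes at rate $(2R)^k\bigl(-F(u)+\max_{\lambda\in\Lambda_2}(-s_\lambda\,dx_\lambda(e\wedge u))\bigr)$, where $s_\lambda\coloneqq\operatorname{sgn}dx_\lambda(e\wedge v)$. This rate is negative exactly when no common maximizer unfolds $T$, while the transition near $\partial Q_R$ costs only $O(\varepsilon R^{k-1})$. As written, however, neither argument is in your proposal.
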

	
	\begin{proof}
		Write $\mu$ for mass$^\ast$ volume. Suppose such a projection exists, and set $s=(\pi_\lambda|_T)^{-1}$.
		Let $\varphi$ be a compactly supported Lipschitz deformation, and choose
		a bounded open set $U$ containing the support of $\varphi-\mathrm{id}$
		and its image under $\varphi$. The map
		$\pi_\lambda\circ\varphi\circ s$ agrees with the identity outside a compact
		set, so it has degree one and is surjective. Since $\varphi$ fixes
		$T\setminus U$ and maps $T\cap U$ into $U$, it follows that
		\[
		\pi_\lambda(T\cap U)\subset\pi_\lambda(\varphi(T)\cap U).
		\]
		Projection preserves $\mu$ on $T$ and does not increase it on rectifiable
		sets. Hence $\mu(T\cap U)\le\mu(\varphi(T)\cap U)$, as required.
		
		Conversely, suppose that \(T\) is locally minimizing. Put \(k\coloneqq m-1\) and translate the common boundary to a linear subspace \(L\). Choose a basis \(e_1,\ldots,e_k\) of \(L\), and put \(e\coloneqq e_1\wedge\cdots\wedge e_k\), with \(e\coloneqq1\) when \(k=0\). Write
		\[
		H_1=L+\mathbb R_+u,\qquad H_2=L+\mathbb R_+v
		\]
		for some $u,v\in\R^n$. If $T$ is an $m$-dimensional subspace,
		the statement is immediate. Otherwise, $u$ and $v$ are linearly
		independent modulo $L$. Define
		\[
		F(z)\coloneqq\mu(e\wedge z)
		=\max_{\lambda\in\Lambda(n,m)}
		|dx_\lambda(e\wedge z)|.
		\]
		Linearity of the coordinate forms gives
		\[
		F(u-v)
		\le
		\max_\lambda\bigl(
		|dx_\lambda(e\wedge u)|+
		|dx_\lambda(e\wedge v)|
		\bigr)
		\le F(u)+F(v).
		\]
		We claim that equality holds. Suppose instead that
		\(\delta\coloneqq F(u)+F(v)-F(u-v)>0\).
		For \(R>0\), set
		\[
		Q_R\coloneqq
		\Biggl\{\sum_{j=1}^k t_je_j:|t_j|\le R\Biggr\},
		\qquad
		\Delta\coloneqq\conv\{v,0,u\}.
		\]
		The patch
		\[
		A_R\coloneqq
		\bigl(Q_R+[0,u]\bigr)\cup
		\bigl(Q_R+[0,v]\bigr)
		\subset T
		\]
		has volume
		\[
		\mu(A_R)=(2R)^k\bigl(F(u)+F(v)\bigr).
		\]
		Consider the prism \(P_R\coloneqq Q_R+\Delta\), an affine copy of \(Q_R\times\Delta\). The patch \(A_R\) consists of its two adjacent facets corresponding to \([v,0]\) and \([0,u]\). The remaining boundary is $D_R\coloneqq B_R\cup S_R$, where
		\[
		B_R\coloneqq Q_R+[v,u]
		\quad \mbox{and} \quad
		S_R\coloneqq\partial Q_R+\Delta.
		\]
		For \(k=0\), the side \(S_R\) is absent.
		Both \(A_R\) and \(D_R\) are polyhedral \(m\)-disks, and their boundaries coincide. A piecewise affine homeomorphism \(A_R\to D_R\) fixing this boundary, together with the identity on \(T\setminus A_R\), extends to a compactly supported Lipschitz deformation of the ambient space. Thus this replacement is admissible.
		The volume of \(B_R\) is \((2R)^kF(u-v)\).
		For \(k\ge1\), each side face is the product of a \((k-1)\)-dimensional face of \(Q_R\) with the fixed triangle \(\Delta\), and hence
		\[
		\mu(S_R)=C R^{k-1},\qquad C\coloneqq\mu(S_1).
		\]
		Consequently,
		\[
		\mu(D_R)-\mu(A_R)
		\le -(2R)^k\delta+CR^{k-1}<0
		\]
		for sufficiently large \(R\). For \(k=0\), the difference is simply \(-\delta<0\). Scaling the construction about the origin places this volume-decreasing deformation in an arbitrarily small neighborhood, contradicting local minimality. Therefore
		\[
		F(u)+F(v)=F(u-v).
		\]
		Choose a coordinate index \(\lambda\) attaining the maximum defining \(F(u-v)\), and a sign \(\varepsilon\) such that
		\[
		\varepsilon\,dx_\lambda(e\wedge(u-v))=F(u-v).
		\]
		Since \(\varepsilon\,dx_\lambda(e\wedge u)\le F(u)\) and
		\(-\varepsilon\,dx_\lambda(e\wedge v)\le F(v)\), equality in their sum gives
		\[
		\varepsilon\,dx_\lambda(e\wedge u)=F(u)>0,
		\qquad
		-\varepsilon\,dx_\lambda(e\wedge v)=F(v)>0.
		\]
		Thus $\pi_\lambda$ is injective and volume-preserving on each supporting $m$-plane. The signs show that the two half-spaces project to opposite sides of $\pi_\lambda(L)$. Hence their union maps bijectively onto $\ell^m_\infty$, preserving volume.
	\end{proof}
	
	For an integral polyhedral minimizer, write the incident $m$-faces at an interior $(m-1)$-face as oriented sheets of multiplicity one. Boundary cancellation gives equally many sheets with each boundary orientation. Replacing any pair with opposite boundary orientations by a smaller filling would decrease the mass of the chain. Hence every such pair admits a common coordinate calibration.
	
	\subsection*{Origin of the examples and use of AI}
	
	The disks $D_3$ and $D_4$, together with the calibration of $D_3$,
	were found in 2022 through guessing and numerical
	surface minimization using SciPy. The calibration of $D_4$ was
	found in 2025, guided by linear programming experiments.
	Its structure suggested the general formulation in
	Section~\ref{sec:lp}. ChatGPT was subsequently used to revise the exposition and
	mathematical arguments, check calculations and references,
	and prepare the TikZ figures. The author takes responsibility
	for the mathematical content.
	
	\enlargethispage{3\baselineskip}
	
\end{document}